\setlist[itemize]{leftmargin=*,parsep=8pt}
\setlist[enumerate]{labelsep=*, leftmargin=1.9pc}
\colorlet{LightRubineRed}{RubineRed!70!}
\colorlet{Mycolor1}{green!10!orange!90!}
\definecolor{DarkRed}{HTML}{cc0000}
\definecolor{ChapterHeadColor}{HTML}{cc0000}
\definecolor{PartHeadColor}{HTML}{cc0000}
\definecolor{DarkBlue}{HTML}{0000cc}
\definecolor{QuoteColor}{HTML}{665665}
\newcommand{\BB}{\boldsymbol{\mathscr{S}}}
\newcommand{\B}{{\mathcal B}}
\renewcommand{\S}{{\mathcal S}}
\newcommand{\Zz}{{\mathcal Z}}
\newcommand{\E}{{\mathcal E}}
\newcommand{\Z}{{\mathbb Z}}
\newcommand{\Q}{{\mathbb Q}}
\newcommand{\Co}{{\mathbb C}}
\theoremstyle{plain}
\newtheorem{Theorem}{Theorem}[section]
\newtheorem{Lemma}[Theorem]{Lemma}
\theoremstyle{definition}
\theoremstyle{remark}
\newtheorem{Remark}[Theorem]{Remark} 
\newtheorem{Example}[Theorem]{Example} 
\numberwithin{equation}{section}
\DeclareSymbolFont{cmsymbols}{OMS}{cmsy}{m}{n}
\DeclareSymbolFontAlphabet{\mathcal}{cmsymbols}
\begin{document}
	

\subjclass{20E22, 20E10, 20E06, 03D25.}
\keywords{Recursive group, finitely presented group, embedding of group, Higman embedding, Higman operations, benign subgroup.}

\title[The Higman operations]{\large
The Higman operations and  embeddings\\ of recursive groups}

\author{V.\,H. Mikaelian}

\begin{abstract}
In the context of Higman embeddings of recursive groups into finitely presented groups we  suggest an approach, termed the $H$-machine, which for certain wide classes of groups allows constructive Higman embeddings of recursive groups into finitely presented groups.
The approach is based on  Higman operations, and it explicitly constructs some
specific recursively enumerable sets of integer sequences 
arising during the embeddings.
Specific auxiliary operations are introduced to make the work with  Higman operations a simpler and more intuitive procedure.
Also, an automated mechanism of constructive embeddings of countable groups into $2$-generator groups preserving certain ``patterns'' is mentioned.
\end{abstract}

\date{\today}

\maketitle

\setcounter{tocdepth}{2}

\let\oldtocsection=\tocsection
\let\oldtocsubsection=\tocsubsection
\let\oldtocsubsubsection=\tocsubsubsection
\renewcommand{\tocsection}[2]{\hspace{-12pt}\oldtocsection{#1}{#2}}
\renewcommand{\tocsubsection}[2]{\footnotesize \hspace{6pt} \oldtocsubsection{#1}{#2}}
\renewcommand{\tocsubsubsection}[2]{ \hspace{42pt}\oldtocsubsubsection{#1}{#2}}

{\small \tableofcontents}

\section{Introduction}

\subsection{Higman's embedding theorem}
\label{SU Higman's embedding theprem}
In 1961 Higman proved that \textit{a finitely generated group can be embedded in a finitely presented group if and only if it is recursively presented} \cite{Higman Subgroups in fP groups} (see  definitions and notation in~\ref{SU Basic notation and references}).
In his work Higman extensively uses specific recursively enumerable sets of integer sequences which in some sense ``code'' defining relations of groups. Our objective is to suggest an algorithm for construction of such integer sequences for certain types of groups. 
This allows us to list wide classes of groups for which Higman's famous embedding construction can be constructive and effective.

The approach of \cite{Higman Subgroups in fP groups}
will be very briefly outlined in Section~\ref{SE The main steps of the embedding} below. For now let us just mention the main steps of Higman's construction to distinguish those parts to which our new algorithm concerns.
A finitely generated group 
$$G = \langle\, A \mathrel{|} R \rangle = \langle a_1,a_2,\ldots \mathrel{|} r_1, r_2,\ldots \rangle $$
with recursively enumerable relations $r_1, r_2,\ldots$
can be constructively embedded into a 
$2$-generator group
$T=\langle b,c 
\mathrel{|} r'_1, r'_2,\ldots 
\rangle$ 
where the relations $r'_1=r'_1(b,c),\;r'_2=r'_2(b,c),\ldots$ are certain words on letters $b,c$,  and they also are recursively enumerable.
Then for each $r'_s$,\; $s\!=\!1,2,\ldots$, a unique sequence $f_s$ of integers is compiled (see details in~\ref{SU The main steps of the proof on Higman embedding theorem}) so that the set $\{r'_1, r'_2,\ldots \}$ of relations
is ``coded'' by means of a set $\B=\{f_1, f_2,\ldots \}$ of such sequences. Since the transaction from relations set $R$ to sequences set $\B$ is done via a just few constructive steps, the set $\B$ also is recursively enumerable.

The tedious part of \cite{Higman Subgroups in fP groups}
is to show that $\B$ is recursively enumerable if and only if $\B$ can be constructed by some chain of special operators \eqref{EQ Higman operations}.
And parallel to application of those operations a respective \textit{benign subgroup} is being constructed in the free group $F_3=\langle a,b,c\rangle$ of rank three  (see \ref{SU Benign subgroups}). As this process ends up on construction of $\B$, the respective benign subgroup $A_\B$ is obtained inside $F_3$.

In the final short step the benign subgroup $A_\B$ is used to get another benign subgroup in the free group $F_2=\langle b,c\rangle$ of rank two. Then that new  subgroup is used to embed the group $T$ (and hence also $G$) into a finitely presented group via the ``The Higman Rope Trick'' (see 
p. 219 in \cite{Lyndon Schupp},
and \cite{Higman rope trick}).

\subsection{Our  algorithm for construction of $\B$}
\label{SU An algorithm to construct the set}

In~\cite{Higman Subgroups in fP groups} Higman just relies on \textit{theoretical possibility} for construction of $\B$ via operations \eqref{EQ Higman operations}, without any examples of such construction for particular groups. This is understandable as the objective of the fundamental article \cite{Higman Subgroups in fP groups} is much deeper, and for its purposes it is sufficient to know that such a construction of $\B$  is possible, provided that the set of G{\"o}del numbers specifically constructed for the set $R$ is equal to the range of a certain partial recursive function described by Kleene's  characterization (see references in \ref{SU Basic notation and references} below).

However, it is rather strange that after Higman's result there was no attempt to explicitly find constructions of $\B$ by operations \eqref{EQ Higman operations} for particular groups (at least, we were unable to find them in the literature). 
Investigating the topic we noticed that such construction may be a \textit{doable} task for many classes of groups, such as, the free abelian, metabelian, soluble, nilpotent groups, the additive group of  rational numbers $\Q$, the quasicyclic group $\Co_{p^\infty}$, 
divisible abelian groups, etc. (see examples in \ref{SU The structure of sequences}).  

We suggest an $H$-\textit{machine} algorithm with some generic tools that allow to explicitly construct $\B$ by operations \eqref{EQ Higman operations} without any usage of Kleene's  characterization, at all.
In Example~\ref{EX Back to the example for Z^infty} we show how simple it is to apply the algorithm (see Remark~\ref{RE Comparing the very similarly looking sequences}).

\medskip
The advantage of this approach is that construction of the benign subgroup $A_\B$ and, thus, of the explicit embedding of the given $G$ into a finitely presented group becomes a managable procedure. 

To shorten the routine of work with basic Higman operations we introduce a few \textit{auxiliary operations} 
which make the proofs not only shorter but also, we hope, more intuitive to understand (see notation in \ref{SU Extra auxiliary operations}).

\medskip
Another embedding aspect we touch upon is the manner by which the initial group $G$ is constructively embedded into a $2$-generator group $T$, and how the relations of $T$ can be obtained from those of $G$.
In the literature there is no shortage in constructive embeddings of this type (in fact, the original method of \cite{HigmanNeumannNeumann} already  allows that). 
However, for our purposes we need a method which not
only makes deduction of the relations of $T$ from the relations of $G$ a trivial automated  task,
but also \textit{preserves certain ``patterns''} in the relations 
(for illustration of the ``patterns'' see \cite{Embeddings with universal words} and also examples in subsection \ref{SU The structure of sequences} below).

\subsection*{Acknowledgements}
Application of the methods that we present here allows to build a group answering a question of Bridson and de la Harpe on embedding of $\Q$ into a finitely presented groups mentioned in Problem 14.10 (a)  
in the Kourovka notebook \cite{kourovka}.
Recently a direct solution to that problem was found by Belk, Hyde and Matucci in \cite{Belk Hyde Matucci}, see Remark~\ref{RE embedding of rational group by Belk Hyde Matucci} for details.
Another explicit embedding will be given in \cite{An explicit embedding of Q}.

The current work is supported by the 21T-1A213 grant of SCS MES RA. 

It is a pleasure to me to thank the Referee of the Journal of Group Theory for careful work and for very helpful and encouraging remarks.


\section{Definitions, references, preliminary constructions}

\subsection{Basic notation and references}
\label{SU Basic notation and references}
For general group theory information we refer to textbooks \cite{Robinson, Kargapolov Merzljakov, Rotman}.
For background on free constructions, such as, free products, free products with amalgamated subgroups, HNN-extension we refer to \cite{Lyndon Schupp, Bogopolski,Rotman}. See also the recent note \cite{A modified proof}  where we apply some methods related to free constructions. We use them here without restating the notation again.
Information on varieties of groups can be found in Hanna Neumann's monograph~\cite{HannaNeumann}.

We are going to study recursive groups in the language of Higman operations~\eqref{EQ Higman operations}. Recall that a recursive (or recursively presented) group $G$ is that possessing a presentation
$
G=
\langle\, X \mathrel{|} R \,\rangle
=\langle x_1, \ldots, x_n, \ldots \mathrel{|} r_1, r_2, \ldots\, \rangle
$
with finite or countable set of generators $X$, and with a \textit{recursively enumerable} set of defining relations $R$.
That is to say, to each relation $r_i\in R$ one can assign a  \textit{G{\"o}del number} (see Section 2 in 
\cite{Higman Subgroups in fP groups}
or p. 218 in \cite{Lyndon Schupp})  to interpret $R$ via a set of respective G{\"o}del numbers, and then that set turns out to be the range of a partial recursive function. 
By Kleene's  characterization, a
\textit{partial recursive function} is that obtained from the zero function, the
successor function,
and  the 
identity function
using the operations of  composition, 
primitive recursion, and minimization  (see \cite{Davis, Rogers,Boolos Burgess Jeffrey} for details).

Although Higman's theorem is for \textit{finitely} generated recursive groups,
its analog holds for embeddings of \textit{countably} generated recursive groups into 
finitely presented groups. For, 
a countably generated recursive group can first be effectively embedded into a finitely generated 
recursive group 
(see remark proceeding Corollary on p. 456 in \cite{Higman Subgroups in fP groups}). Thus, in embedding procedures we will not take care of the number of generators, as long as the relations are recursively enumerated.

\subsection{Sets of integer-valued functions and sequences of integers}
\label{SU Sets of integer-valued functions}

Denote by $\mathcal E$ the set of all functions $f : \Z \to \Z$ with finite support $\sup (f)=\{i\in \Z \mathrel{|} f(i)\neq 0\}$.
When $m$ is any positive integer such that 
$\sup (f) \subseteq \{0,1,\ldots, m-1\}$, then we can interpret $f$ as a \textit{sequence} $f=(n_0,\ldots,n_{m-1})$ of length $m$, assuming $f(i)=n_i$ for each index $i=0,\ldots,m\!-\!1$. 
The value $f(i)$ is called the $i$'th coordinate of $f$, or the coordinate of $f$ at the index $i$.
Say,
$f=(0,0,7,-8,5,5,5,5)$
means that 
$f(2)=7$, $f(3)=-8$, 
$f(i)=5$ for $i=4,\ldots,7$, and 
$f(i)=0$ for any $i\le 1$ or $i\ge 8$.
Here the initial $0$ is the $0$'th coordinate, and $7$ is the $2$'nd coordinate.

Depending on the situation we may
interpret
the same function by sequences of different length by adding some zeros to it. Say, the above function $f$ can be interpreted as the sequence $f=(0,0,7,-8,5,5,5,5,0,0,0)$ with three ``new'' coordinates $f(8)=f(9)=f(10)=0$.
And the constant zero function $f(i)=0$ may equally well be interpreted as $f=(0)$ or, say, as $f=(0,0,0,0)$.

\subsection{The Higman operations}
\label{SU The Higman operations on subsets}

Start by two specific subsets of $\E$:
$$
\Zz\!=\!\big\{(0)\big\},\quad\quad  
\S\!=\!\big\{(n, n+\!1) \mathrel{|} n\!\in\! \Z\big\}.
$$
We are going to extensively use the following  operations from \cite{Higman Subgroups in fP groups}:
\begin{equation}
\label{EQ Higman operations}
\tag{H}
\iota,\; \upsilon;\quad\quad
\rho,\; \sigma,\; \tau,\; \theta,\; \zeta,\; \pi,\; \omega_m \;\; (\text{for each $m=1,2,\ldots$})
\end{equation}
which we call \textit{Higman operations} on subsets of $\E$. 
The first two operations are binary functions, and for any subsets $\mathcal A, \mathcal B$ of $\E$ they are defined as just the intersection 
$\iota(\mathcal A,\mathcal B)=\mathcal A \cap \mathcal B$ and the union
$\upsilon(\mathcal A,\mathcal B)=\mathcal A \cup \mathcal B$ of those sets.
The notation a little differs from the original notation $\iota \mathcal A \mathcal B$ and $\upsilon \mathcal A \mathcal B$ of \cite{Higman Subgroups in fP groups} which in our case would cause confusion when used in long formulas together with other operations.

\smallskip
The rest of Higman operations  are unary functions defined on any subset $\mathcal A$ of $\E$ as follows:

$\rho(\mathcal A)$ consists of all $f\!\in\E$ for which there is a $g\in \mathcal A$ such that $f(i)=g(-i)$.

$\sigma(\mathcal A)$ consists of all $f\!\in\E$ for which there is a $g\in \mathcal A$ such that $f(i)=g(i-1)$.

$\tau(\mathcal A)$ consists of all $f\!\in\E$ for which there is a $g\in \mathcal A$ such that $f(0)\!=\!g(1)$, $f(1)\!=\!g(0)$ and  $f(i)\!=\!g(i)$ for $i\!\neq\! 0,1$.

$\theta(\mathcal A)$ consists of all $f\!\in\E$ for which there is a $g\in \mathcal A$ such that $f(i)=g(2i)$. 

$\zeta(\mathcal A)$ consists of all $f\!\in\E$ for which there is a $g\in \mathcal A$ such that $f(i)=g(i)$ for  $i\neq 0$. 

$\pi(\mathcal A)$ consists of all $f\!\in\E$ for which there is a $g\in \mathcal A$ such that $f(i)=g(i)$ for  $i\le 0$.

$\zeta$ and $\pi$ are called \textit{liberations} of $\mathcal A$ in the sense that $\zeta$ \textit{liberates} the sequences on $0$: for every $g\in \mathcal A$ it adds to our set $\mathcal A$ all the functions $f$ which accept \textit{any} value at $0$, but which coincide with $g$ elsewhere. 
And $\pi$ \textit{liberates} the sequences on positive integers: 
for every $g\in \mathcal A$ the operation $\pi$
adds to $\mathcal A$ all the functions which accept \textit{any} values at positive indices, but which coincide with $g$ on zero and on all negative indices.  

For a fixed $m=1,2,\ldots$ the set 
$\omega_m(\mathcal A)$ consists of all $f\!\in\E$ for which for every $i\in \Z$ there is a $g=t_i = \big(f(mi),\;f(mi+1),\ldots,f(mi+m-1)\big)\in \mathcal A$. This operation is called \textit{sequence building} as it constructs the functions $f$ by means of some subsequences $g$ of length $m$ chosen from $\mathcal A$.
Since $\sup (f)$ is finite, either $\mathcal A$ contains the zero function, or $\omega_m(\mathcal A)=\emptyset$.

We may agree to apply the unary Higman operations to individual functions also: a set $\mathcal A=\{f\}$ may consist of a single function $f$ only, so notations like $\rho f,\; \sigma f,\; \tau f$, etc., should cause no confusion.

To get familiar with these operations the reader may check examples and basic lemmas in Section 2 of \cite{Higman Subgroups in fP groups} or in Subsection 2.2 of \cite{A modified proof}.

\smallskip
Following Higman \cite{Higman Subgroups in fP groups} we denote by $\BB$ the set of all subsets of $\E$ which can be obtained from $\Zz$ and $\S$ by any series of operations \eqref{EQ Higman operations}.
The elements in $\BB$ play a key role in the study of recursively presented groups. 
One of our main tasks below is going to be the discovery of many 
``natural'' generic types of  subsets of $\E$ inside $\BB$.

\subsection{Extra auxiliary operations}
\label{SU Extra auxiliary operations}

Our proofs will be much simplified by some auxiliary operations each of which is a combination of a few Higman operations on subsets $\mathcal A$ of $\E$.

For a positive integer $i$ naturally denote by $\sigma^{i} \mathcal A = \sigma \cdots \sigma \mathcal A$ the result of application of $\sigma$ for $i$ times.
Set the inverse $\sigma^{-1}=\rho\sigma\rho$ as follows:
$f \!\in\! \sigma^{-1}(\mathcal A)$ when there is a $g\in \mathcal A$ such\! that\! $f(i)=g(i+1)$.
This allows to define the negative powers of $\sigma$.
Seting $\sigma^0 \mathcal A =\mathcal A$ we have the powers $\sigma^i$ for \textit{any} integer $i\in \Z$. Clearly, $\sigma^i$ just ``shifts'' a sequence $g\in \mathcal A$ by $|i|$ steps to the right or to the left depending on the sign of $i$. 

It is easy to verify that $\sigma^i \zeta \sigma^{-i} \mathcal A$ consists of all functions $f\in \mathcal A$ in which the $i$'th coordinate is liberated. For briefness denote $\zeta_i = \sigma^i \zeta \sigma^{-i}$.
Moreover, for a finite subset  $S=\{i_1,\ldots,i_m\}\subseteq \Z$ denote the result of application of 
$\zeta_{i_1}\cdots \zeta_{i_m}$ by 
$\zeta_{i_1,\ldots,i_m}$ or by $\zeta_S$.
That is, $\zeta_S \mathcal A$ is the set of all those functions $f\in \E$ for which there is some $g\in \mathcal A$ such that 
$f(i)=g(i)$ for each $i\notin S$.

Denote by $\pi' \mathcal A = \rho \pi \rho \mathcal A$ the liberation of $\mathcal A$ 
on all \textit{negative} coordinates, i.e., the set of all functions $f\in \E$ for which there is some $g\in \mathcal A$ such that 
$f(i)=g(i)$ for each $i\ge 0$.
Denote by $\pi_i \mathcal A = \sigma^{i}\pi\sigma^{-i} \mathcal A $
the liberation of $\mathcal A$ on all coordinates \textit{after} the $i$'th coordinate and, similarly, denote by $\pi'_i \mathcal A = \sigma^{-i}\pi'\sigma^{i} \mathcal A $
the liberation of $\mathcal A$ on all coordinates \textit{before} the $i$'th coordinate. In this notation the original Higman operation $\pi$ is nothing but $\pi_0$.

For any integers $k<l$ set $s=l-k-1$. It is not hard to verify that the set 
$\tau_{k,l} \mathcal A = \sigma^k (\tau \sigma)^{\,s} \tau (\sigma^{-1} \tau)^{-s} \sigma^{-k} \mathcal A$
consists of all modified functions of $\mathcal A$ with $k$'th and $l$'th coordinates ``swapped''.
More precisely, $\tau_{k,l} \mathcal A$ is the set of all functions $f\in \E$ for which there is some $g\in \mathcal A$ such that 
$f(k)=g(l)$,  
$f(l)=g(k)$ 
and
$f(i)=g(i)$ for each $i\neq k,l$.
In this notation the Higman operation $\tau$ is nothing but $\tau_{\!0,1}$.

Furthermore, since any permutation $\alpha$ of a finite set $S$ has a transposition decomposition $\alpha=(k_1\, l_1)\cdots (k_m\, l_m)$, we may introduce the set 
$\alpha\mathcal A=\tau_{k_1,l_1}\cdots \tau_{k_m,l_m} \!\mathcal A$ which can be obtained from $\mathcal A$ by respective permutation of coordinates for all $g\in \mathcal A$.\;
Clearly, $\alpha\mathcal A$ is the set of all functions $f$ for which there is a $g\in \mathcal A$ such that 
$f(i)=g\big(\alpha^{-1}(i)\big)$ for any $i\in \Z$. 

For any finite set of indices $S=\{i_1,\ldots,i_m\}$ and
for any $\mathcal A \subseteq \E$
define the \textit{extract} 
$\epsilon_{S} \mathcal A = \epsilon_{i_1,\ldots,i_m} \mathcal A $ to be the $m$-tuples set $\big\{\big(g(i_1),\ldots,g(i_m)\big) \mathrel{|} g\in \mathcal A\big\}$. 
This operation can be constructed using \eqref{EQ Higman operations} as follows. 
For clarity assume $i_1 \!<\cdots <i_m$, and denote 
$S'=\{i_1, i_1\!+\!1,\ldots,
i_m\!-\!1, i_m \} \backslash S$ (the set of all integers from $i_1$ to $i_m$ except those in $S$).
The set 
$\mathcal A_1 = \zeta^{\vphantom{1}}_{S'} \pi'_{i_1} \pi^{\vphantom{1}}_{i_m}\mathcal A$
consists of functions from $\mathcal A$ with \textit{all} coordinates outside $S$ liberated.
And the set $\mathcal A_2 =\zeta_{S}\Zz$ consists of all functions which accept any integer values on  $S=\{i_1,\ldots,i_m\}$, and which are zero elsewhere. 
The intersection $\mathcal A_3 = \iota(\mathcal A_1, \mathcal A_2)$ consists of those functions $f$ for which there is some $g\in \mathcal A$ such that $f(i)=g(i)$ when $i\in S$, and $f(i)=0$ elsewhere.
To get $\epsilon_{S} \mathcal A$ from $\mathcal A_3$ it remains to apply the appropriate permutation $\alpha$  that re-distributes the coordinates of $f\in \mathcal A_3$ at the indices $i_1,\ldots,i_m$ on the
$0,1,\ldots,m-1$ (here $\alpha$ is a permutation of the union $\{i_1,\ldots,i_m;\;0,1,\ldots,m-1\}$).

For any subset $\mathcal A \subseteq \E$ denote $\sup(\mathcal A) = \cup\,\{\sup(f) \mathrel{|} f\in \mathcal A\}$.
The point-wise sum $f+g$ of any functions $f,g\in\E$ is defined as $(f+g)(n)=f(n)+g(n)$,\;  $n\in \Z$.
For any subsets $\mathcal A,\; \mathcal B \subseteq \E$ their sum 
$\mathcal A + \mathcal B$ is the set
$\{f+g  \mathrel{|} f\in \mathcal A,\; g\in \mathcal B  \}$. 
The sum of three or more sets is defined in the same manner.
We are going to use this operation for cases when $\sup(\mathcal A)$ and 
$\sup(\mathcal B)$ are disjoint finite sets.

The intersection of any three or more subsets, such as, $\mathcal A,\; \mathcal B ,\; \mathcal C \subseteq \E$ can be expressed by Higman operations as 
$\iota \big(\iota (\mathcal A,\; \mathcal B),\; \mathcal C\big)$. To have shorter notation record this as 
$\iota_3(\mathcal A, \mathcal B , \mathcal C)$. Similarly define  intersections $\iota_n$ and unions $\upsilon_n$.
In \cite{Higman Subgroups in fP groups} Higman denotes the same by $\iota^{\!2} \!\mathcal A \mathcal B  \mathcal C$, but in our case this would create confusion in long formulas with many operations.

Investigating the subsets of $\E$ in $\BB$  we, in addition to standard operations
\eqref{EQ Higman operations},
may often use the introduced auxiliary operations. This will shorten the proofs without changing the actual set $\BB$, for, we above have representation of each auxiliary operation via \eqref{EQ Higman operations}.

\subsection{The Benign subgroups}
\label{SU Benign subgroups}
The concept of \textit{benign subgroups} is the key group-theoretical notion used 
in \cite{Higman Subgroups in fP groups}
to connect the sets in $\BB$ with subgroups in free groups, needed in  construction of embeddings into finitely presented groups. 
A subgroup $H$ in a finitely generated group $G$ is called a benign subgroup in $G$, if $G$ can be embedded in a finitely presented group $K$ with a finitely generated subgroup $L\le K$ such that $G \cap L = H$.

For basic properties and examples of benign subgroups we refer to Section~3 in \cite{Higman Subgroups in fP groups} or to subsections~3.1 and 3.2 in \cite{A modified proof}.

Below we reserve the letters $K,L$ for these specific groups only. In particular, if we have $K,L$ for an ``old'' group, and then we construct a ``new'' group with a respective finitely presented overgroup and its finitely generated subgroup, we may again denote them by the same letters $K$ and $L$. 
Also, if we have two benign subgroups, say, $H_1$ and $H_2$, we will denote the respective groups by $K_1, K_2$ and $L_1, L_2$.
The context will tell us \textit{for which} benign subgroups they are being considered, and no misunderstanding will occur.

\section{The main steps of embeddings of recursive groups}
\label{SE The main steps of the embedding}

\subsection{Embedding with ``universal'' words in a free group of rank $2$}
\label{SU The universal generators}

Any countable group $G$ is embeddable into a $2$-generator group $T$ \cite{HigmanNeumannNeumann}.
%
Higman's embedding construction \cite{Higman Subgroups in fP groups} starts by some \textit{effective} embedding of $G$ into an appropriate $T$.
In the recent note 
\cite{Embeddings with universal words}
we suggested a method of effective embedding of any countable group $G$ into a $2$-generator group $T$ such that the defining relations of $T$ are straightforward to deduce from relations of $G$. In fact, the very first embedding construction \cite{HigmanNeumannNeumann} (based on free constructions) and some other embedding constructions (based on wreath products, group extensions, etc.) already allow finding the relations of $T$. However, we need a method that not only makes deduction of the relations of $T$ from those of $G$ an \textit{automated task}, but also \textit{preserves certain pattern} in them, as we will see a little later, see
Remark~\ref{RE numerous groups}.
%

Let a countable group $G$ be given as 
$G= F/ \bar R= \langle\, A \mathrel{|} R \rangle = \langle a_1, a_2,\ldots \mathrel{|} r_1, r_2,\ldots \rangle $
where
$F$ is a free group on a countable alphabet $A$, 
and where $\bar R=\langle r_1, r_2,\ldots \rangle^F$ is the normal closure of the set of all defining relations 
$r_s(a_{i_{s,1}},\ldots,a_{i_{s,\,k_s}})$,
$s=1,2,\ldots$\,, in $F$.

In the free group 
$F_2=\langle b,c \rangle$ of rank $2$ 
choose the words:
\begin{equation}
\label{EQ formula for a_(x,y)}
a_i(b,c)
= c^{(b c^i)^{\,2}\, b^{-1}} 
\!(c^{-1})^b
=  \;
b c^{-i} b^{-1}\!  c^{-i} b^{-1}   \!  
c
b\, c^i   b  c^i b^{-2} c^{-1} \! b,
\end{equation}
$i=1,2,\ldots$\;
The map $\gamma: a_i \to a_i(b,c)$
defines a correspondence:
$$
r_s(a_{i_{s,1}},\ldots,a_{i_{s,\,k_s}})
\to
r'_s (b,c)\,=\,
r_s\big(a_{i_{s,1}}\!(b,c),\ldots,a_{i_{s,\,k_s}}\!(b,c)\big)
$$
obtained by replacing 
each $a_{i_{s,j}}$ in $r_s$
by the word $a_{i_{s,j}}(b,c)$,\; $j=1,\ldots,k_s$.
In fact $\gamma$ defines an embedding of $G$ into the $2$-generator group
$$
T =\big\langle b,c 
\;\mathrel{|}\;
r'_1 (b,c),\; r'_2 (b,c),\ldots\,
\big\rangle
$$
given by the relations 
$r'_s (b,c)$,\,
$s=1,2,\ldots$\,,\, on letters $b,c$ (see Theorem~1.1 in \cite{Embeddings with universal words}). 
If $R$ is recursively enumerable, then the set $R'$ of all above relations $r'_s (b,c)$ also is recursively enumerable. That is,  
$T$ is a recursive group, in case $G$ is.

And when $G$ is a torsion-free group, then \eqref{EQ formula for a_(x,y)} can be replaced by shorter words
\begin{equation}
\label{EQ slightly shorter word}
\bar a_i (b,c)
=  \;
c^{(b c^i)^{\,2} b^{\!-1}} \!
=  \,
b c^{-i} b^{-1}\!  c^{-i} b^{-1}   \!  
c
b\, c^i   b  c^i b^{-1}\!.
\end{equation}
Inserting these $\bar a_i (b,c)$ in $r_s$ we get  shorter words $r''_s(b,c)$, and then $\gamma: a_i \to \bar a_i(b,c)$ 
defines an embedding of $G$ into the $2$-generator group
$$
T =\big\langle b,c 
\;\mathrel{|}\;
r''_1 (b,c),\; r''_2 (b,c),\ldots\,
\big\rangle
$$
(see Theorem~3.2 in \cite{Embeddings with universal words}).

\begin{Example}
\label{EX embedding of free abellian into 2-generator group}
Let $G = \big\langle a_1, a_2,\ldots \mathrel{|} [a_k,a_l],\; k,l=1,2,\ldots \big\rangle$ be the free abelian group $\Z^{\infty}$ of countable rank with relations 
$r_s=r_{k,\,l}=[a_k,a_l]$. 
Since $G$ is torsion-free, we can use the shorter formula \eqref{EQ slightly shorter word} to 
map each $a_i$ respectively to $\bar a_i (b,c)$ in order to get the embedding of $G$ into the $2$-generator recursive group:
$$
T =\big\langle b,c 
\;\mathrel{|}\;
\big[
c^{(b c^k)^{\,2} b^{\!-1}} 
\!\!\!\!\!,\,\,\,
c^{(b c^l)^{\,2} b^{\!-1}} 
\big] ,\;\; k,l=1,2,\ldots
\big\rangle.
$$
\end{Example}

\subsection{The main construction of the Higman embedding}
\label{SU The main steps of the proof on Higman embedding theorem}

For free generators $a,b,c$ fix the free group $F_3=\langle a,b,c \rangle$ in addition to the above mentioned
$F_2=\langle b,c \rangle$. Denote by $b_i$ the conjugate 
$b^{c^i}$ for any $i\in \Z$.
Then for each function $f\!\in \E$ define the product 
$b_f=\cdots b_{-1}^{f(-1)}b_0^{f(0)}b_1^{f(1)}\cdots$
and the conjugate $a_f=a^{b_f}$\!.
Say, for $f=(5,2,-1)$ we have
$$a_f= 
a^{b_{0}^{5} b_{1}^{2} b_{2}^{-1}}
=
c^{-2} b c b^{-2} c
b^{-5}
\cdot a \cdot 
b^5 
c^{-1} b^{2} c^{-1} 
 b^{-1} c^2
.$$ 
For any subset $\B$ of $\E$
introduce the subgroup $A_\B = \langle a_f \mathrel{|} f \in \B\, \rangle$ in $F_3$.
In particular, for the zero set $\B=\Zz$ we get the subgroup 
$$
A_\Zz = \langle a_f \mathrel{|} f=(0)\,\rangle=
\langle a\rangle,
$$
and for the set $\B=\S$ we get the subgroup 
$$
A_\S = \langle a_f \mathrel{|} f\in \S\,\rangle = 
\langle 
c^{-1}\!b^{-(n+1)}c\,b^{-n}
\cdot a \cdot 
b^n c^{-1}\!b^{n+1}c \mathrel{|} n\in \Z\,\rangle.
$$
As is verified in Lemma 4.4 in \cite{Higman Subgroups in fP groups},
$A_\Zz$ and $A_\S$ are benign in 
$F_3$, and the respective $K$ and $L$ (check notation in \ref{SU Benign subgroups}) can easily be constructed for each of them.

The most part of \cite{Higman Subgroups in fP groups} is occupied by proofs for Theorem 3 and for Theorem 4 which set up the environment in which recursion is studied by group-theoretical means.
By Theorem 4 a set 
$\B$ is recursively enumerable in $\E$ 
if and only if 
$A_{\B}$ is benign in $F_3$,
and by Theorem 3  
$\B$ is recursively enumerable in $\E$ if and only if it belongs to $\BB$, i.e., it can be constructed from the basic sets $\Zz$ and $\S$ using the Higman operations \eqref{EQ Higman operations}.
This means we can start from benign subgroups $A_\Zz$ and $A_\S$, and as $\B$ is being built from $\Zz$ and $\S$ by some series of operations \eqref{EQ Higman operations}, the benign subgroup $A_\B$ is being constructed step-by-step.
Note that after Subsection~\ref{SU Extra auxiliary operations} we are free to also use the new auxiliary operations we suggested there.
    
\medskip
Each relation $r'_s$ we constructed in \ref{SU The universal generators} for our recursive
$2$-generator
group $T= \langle b,c \!\mathrel{|}\! R' \,\rangle$
can be written as $r'_s(b,c) = b^{n_0}c^{n_1} \cdots b^{n_{2m}}c^{n_{2m+1}}$ for some $m=m(s)$, and this presentation will be unique, if we also require $n_1,\ldots,n_{2m} \neq 0$. Thus, $r'_s$ can be ``coded'' by the sequence  of exponents $f_s=(n_0, n_1, \ldots ,n_{2m+1})$, and the elements $b_f=b_{f_s}$ and $a_f=a_{f_s}$ can be defined for these particular $f=f_s$. Say, for  $b^3 c b^{-1}c^2$ we have 
$f=(3,1,-1,2)$ and
$b_f=
b_0^{3} b_1 b_2^{-1} b_3^{2}$ \,with $a_f=a^{b_0^{3} b_1 b_2^{-1} b_3^{2}}$.

The set $\B=\{f_1, f_2,\ldots\}$ of all such sequences clearly is a subset of $\E$, and we can define the respective subgroup $A_\B
= \langle a_f \mathrel{|} f \in \B\, \rangle
=\langle  a_{f_s} \mathrel{|} s=1,2,\ldots\, \rangle$
in $F_3$. As we mentioned above, $A_\B$ is benign in $F_3$ if and only if $\B$
can be constructed from the basic sets $\Zz$ and $\S$ using the Higman operations.
This launches the following massive procedure in \cite{Higman Subgroups in fP groups}:
the set $\B$ is written as an output of a series of operations \eqref{EQ Higman operations} started from $\Zz$ and $\S$. For each step one of the following actions may be taken:

\begin{enumerate}
\item
$\B_1, \B_2$ are already given, 
and $\B_3$ is obtained from them by any of the \textit{binary} Higman operations $\iota,\; \upsilon$. Also given are the respective benign subgroups $A_{\B_1},\;A_{\B_2}$ in $F_3$, together with the respective groups $K_1, K_2$ and $L_1, L_2$  (see the remark about notation in \ref{SU Benign subgroups}).  
Then $A_{\B_3}$ also is benign, and can construct the respective $K_3$ and $L_3$.

\item
$\B_1$ is already given, and $\B_2$ is 
obtained from it by any of the \textit{unary} Higman operations
$\rho,\; \sigma,\; \tau,\; \theta,\; \zeta,\; \pi,\; \omega_m$ for $m=1,2,\ldots$
Also given are the respective benign subgroup $A_{\B_1}$ in $F_3$, together with the respective groups $K_1$ and $L_1$.  
Then $A_{\B_2}$ also is benign, and we have a mechanism  allowing us to construct the $K_2$ and $L_2$.
\end{enumerate}

This procedure eventually outputs our sequences set $\B$ together with $A_{\B}$, with the respective group $K$ and its subgroup $L$.

\smallskip
If for a group $G$ (or for groups of a given generic type) we are able to explicitly write the set $\B$, and are able to tell how $\B$ can be extracted from $\Zz$ and $\S$
by operations \eqref{EQ Higman operations}, then we have an embedding of $F_3$ into a finitely presented group $K$ with a finitely generated group $L$ such that $F_3 \cap L = A_\B$.

The final part of the Higman embedding is far shorter. 
By the proofs of Lemma 5.1 and Lemma 5.2 in~\cite{Higman Subgroups in fP groups} the normal closure 
$\bar R=\langle R' \rangle^{F_2}$ is benign in $F_2$ if and only if $A_{\B}$ is benign in $F_3$. 
The proofs of these lemmas also provide the finitely presented group $K$ with a finitely generated subgroup $L$ such that
$K$ embeds $F_2$, and also $F_2 \cap L = \bar R$.
Then ``the Higman Rope Trick'' (see the end of Section 5 in \cite{Higman Subgroups in fP groups}, 
p.~219 in \cite{Lyndon Schupp},
or \cite{Higman rope trick}) uses these $K$ and $L$ to embed $T= \langle b,c \!\mathrel{|}\! R' \rangle$, and thus also $G$, into a finitely presented group using a free product with amalgamation and a HNN-extension.

That is, \textit{if we are able to explicitly write $\B$ by the operations \eqref{EQ Higman operations}, then we can construct the explicit embedding of $G$ into a finitely presented group.}
This is what we are going to do in the rest of this note.

\subsection{Examples, the structure of sequences in $\B$}
\label{SU The structure of sequences}

Let us continue the earlier Example~\ref{EX embedding of free abellian into 2-generator group} by applying the constructions from previous subsection for the group $\Z^{\infty}$: 

\begin{Example}
\label{EX turn back}
The group $T$ in Example~\ref{EX embedding of free abellian into 2-generator group} has the relations:
$$
r''_{s}\,(b,c)=r''_{k,\,l}\,(b,c)=\big[
c^{(b c^k)^{\,2} b^{\!-1}} 
\!\!\!\!\!,\,\,\,
c^{(b c^l)^{\,2} b^{\!-1}} 
\big]
$$
$$
=
b\, c^{-k} b^{-1} c^{-k} b^{-1} c^{-1} b\, c^{k}  b\, c^{k-l} 
b^{-1} c^{-l} b^{-1} c^{-1} b\, c^{l}  b\, c^{l-k}
b^{-1}\!  c^{-k} b^{-1} \!  
c
b\, c^k   b  c^{k-l}
b^{-1}\!  c^{-l} b^{-1}   \!  
c
b\, c^l   b  c^l b^{-1}\!\!,
$$
$k,l=1,2,\ldots$\;
The respective sequence in $\E$ is: 
$$
f_s=f_{k,l}=
(
1,\, -k,\,  -1,\,  -k,\, -1,\,  -1,\,  1,\,  k,\,  1,\,  k\!-\!l,\,  
-1,\,  -l,\,  -1,\,  -1,\,  1,\, l ,\, 1,\,  l\!-\!k, 
\hskip15mm
$$
$$
\hskip50mm
-1,\,   -k,\,  -1,\,  1,\,  1,\,  k,\,    1 ,\,  k\!-\!l,\, 
-1,\,  -l,\,  -1,\,  1,\, 1, \, l,\,  1,\,  l,\, -1
).
$$ 
As we see, each $f_{k,l}$ is a sequence of length $35$ mostly filled by eleven entries $1$ and by eleven entries $-1$, with the following exceptions only: 
two entries are $k$;\;
three entries are $-k$;\;
three entries are $l$;\; 
two entries are $-l$;\;
two entries are $k\!-\!l$;\;
one entry is $l\!-\!k$ 
(the case with $k=l$ is not an exception, as $a_k$ commutes with itself, and we will just have some coordinates $k-k=0$ in the sequence above).
Denote the respective set of sequences as $\B=\{f_{k,l} ,\; k,l=1,2,\ldots \}$.

\smallskip
\textit{Can this $\B$ be constructed by a series of operations \eqref{EQ Higman operations}?} 
For now let us just simplify this question, postponing the full answer to Example~\ref{EX Back to the example for Z^infty}.
As we saw in Subsection~\ref{SU Extra auxiliary operations}, if $\alpha$ is any permutation of $\sup(\B)$, then $\B$
belongs to $\BB$ if and only if  $\alpha\B$ belongs to $\BB$.
In our case $\sup(\B)$ is in the set 
$\{0,1,\ldots,34\}$ of all $35$ indices.
It is trivial to find the permutation 
$$
\alpha =
(0)
\;
(1\,\,24\,\, 7\,\, 22\,\, 6)
\;
(2\,\, 11\,\, 30\,\, 9\,\, 32\,\, 10\,\, 14\,\, 3\,\, 25\,\, 33\,\, 29\,\, 8)\hskip50mm
$$
\vskip-8mm
$$\hskip45mm
(4\,\, 12\,\, 15 \,\,27\,\, 31\,\, 28\,\, 20\,\, 18\,\, 17 \,\,\,34\,\, 21 \,\,5 \,\, 13\,\; 16)
\;
(19  \,\;26) 
\;
(23)
$$ 
(we write the cycles of length $1$ also) that reorders the indices so that all the similar coordinates in $f_{k,l}$ are grouped, i.e., $\alpha  f_{k,l}$ starts by $1$ repeated eleven times, followed by $-1$ repeated eleven times, then followed by two times $k$, etc.:
\begin{equation}
\label{EQ grouped coordinates}
\alpha\, f_{k,l} =
\big(11\times 1,\;\; 
11\times -1,\;\; 
2\times k,\;\; 
3\times -k,\;\; 
3\times l,\;\; 
2\times -l,\;\; 
2\times(k\!-\!l),\;\; 
l\!-\!k
\big)
\end{equation}
where $11\times 1$ naturally means: $1$ repeated eleven times, etc...\;
\end{Example}

\begin{Remark}
\label{RE about permutations}
The above trick will be  used below repeatedly: applying a permutation $\alpha$ we may transform the set $\B$ to such an $\alpha\,\B$ in which coordinates are grouped in the manner of \eqref{EQ grouped coordinates}.
It is simpler to work with such an \textit{appropriately permuted} set $\alpha\,\B$ keeping in mind that $\B$
belongs to $\BB$ if and only if  $\alpha\B$ belongs to $\BB$, as we saw in Subsection~\ref{SU Extra auxiliary operations}. 
\end{Remark}

\begin{Example}
\label{EX embedding of free metabelian into 2-generator group}
Let $G = \big\langle a_1, a_2,\ldots \mathrel{|} \big[[a_k,a_l],[a_u,a_v] \big],\; k,l,u,v=1,2,\ldots \big\rangle$ be the free met\-abe\-l\-ian group $F_\infty(\mathfrak M)$ of countable rank in the variety of all metabelian groups $\mathfrak M$.
It is easy to deduce that this torsion-free group can be embedded into the $2$-generator group:
$$
T =\Big\langle b,c 
\;\mathrel{\Big|}\;
\Big[
\big[
c^{(b c^k)^{\,2} b^{\!-1}} 
\!\!\!\!\!,\,\,\,
c^{(b c^l)^{\,2} b^{\!-1}} 
\big] 
,\,
\big[
c^{(b c^u)^{\,2} b^{\!-1}} 
\!\!\!\!\!,\,\,\,
c^{(b c^v)^{\,2} b^{\!-1}} 
\big]
\Big] 
,\; k,l,u,v=1,2,\ldots
\Big\rangle.
$$
Then the respective appropriately permuted sequences (see the previous remark) will be: 
$$
\alpha\, f_{k,l,u,v} \!=\!
\big(40\times 1,\;\; 
49\times -1,\;
6\times k,\;
7\times -k,\;
6\times l,\;
6\times -l,\; 
7\times u,\; 
6\times -u,\;
6\times v,\; 
6\times -v,
\hskip10mm
$$
\vskip-8mm
$$
\hskip86mm
l-k,\;\; 
l-u,\;\;
v-u,\;\;
v-l,\;\;
k-l,\;\;
k-v,\;\;
u-v
\big)
$$
(we omit the routine calculations). 
\end{Example}

\begin{Example}
\label{EX embedding of rational group}
The additive group of rational numbers $\Q$ has a presentation $\big\langle a_1, a_2,\ldots \mathrel{|} a_s^s=a_{s-1},\; s=2,3,\ldots \big\rangle$
where a generator $a_i$ corresponds to the fraction ${1 \over i!}$ with $i=2,3,\ldots$\,
\cite{Johnson}.
In Example~3.5 in \cite{Embeddings with universal words} we gave the embedding of $\Q$ into the $2$-generator group:
$$
T =\big\langle b,c 
\;\mathrel{|}\;
(c^s)^{(b c^s)^{\,2} b^{\!-1}} 
(c^{-1})^{(b c^{s-1})^{\,2} b^{\!-1}}
\!\!,\;\; s=2,3,\ldots
\big\rangle
=\big\langle b,c 
\;\mathrel{|}\;
(c^s)^{b c^s b c} 
(c^{-1})^{b c^{s-1}b}
\!\!,\;\; s=2,3,\ldots
\big\rangle.
$$
The respective appropriately permuted sequences (see Remark~\ref{RE about permutations}) then are: 
$$
\alpha\, f_{s} \!=\!
\big(6\times 1,\;\; 
6\times -1,\;
2\times s,\;
2\times -s,\;
1\!-s,\;
2\times (s\!-\!1)
\big),
$$
$s=2,3,\ldots$ \;
(the omitted calculations are easy to verify).
\end{Example}

\begin{Remark}
	\label{RE embedding of rational group by Belk Hyde Matucci}
In 1999 Bridson and de la Harpe posed
in the Kourovka notebook \cite{kourovka}
Problem 14.10 in which they grouped a few questions as a ``well-known problem''. 
The questions mainly concern explicit embeddings of some countable groups into finitely generated or finitely presented groups. In particular, one of the points of Problem 14.10 (a) asks to find an explicit embedding of $\Q$ into  a ``natural'' finitely presented group.

As the main steps outlined in Section~\ref{SE The main steps of the embedding} show, we are able to explicitly embed a  recursive group $G$ into a finitely presented group, as soon as we have the explicit embedding of $G$ into the respective $2$-generator group $T=T_G$, have the set $\B$ of integer sequences corresponding to defining relations of $T_G$, and also are able to construct $\B$ from the sets $\Zz$ and from $\S$ using the Higman operations \eqref{EQ Higman operations}.
Example~\ref{EX embedding of rational group} directly provides $T$ for $\Q$, and it gives $\B$  by means of $\alpha\, f_{s}$. 

The $H$-machine of Section~\ref{SE H-machine} shows how to easily write down the operations \eqref{EQ Higman operations}, if we know $\B$.
That is, a group answering Problem 14.10 of Bridson and de la Harpe \cite{kourovka} can be constructed by a series of free constructions matching to the series of Higman operations. Of course, the question is if that group can be called a ``natural'' finitely presented group...

Recently a direct solution to the problem of Bridson and de la Harpe was found by Belk, Hyde and Matucci in \cite{Belk Hyde Matucci}. Moreover, one of the remarkable finitely presented groups constructed by them is the group $T\!\mathcal{A}$ which is $2$-generator and also simple \cite{Belk Hyde Matucci}.
\end{Remark}

\begin{Example}
\label{EX embedding of Pruefer group}
The quasicyclic Pr\"ufer $p$-group $G=\Co_{p^\infty}$ can be presented as:
$$
G = \big\langle a_1, a_2,\ldots \mathrel{|}
a_1^p,\;\;\, a_{s+1}^p\!=a_s
,\;\; s=1,2,\ldots \big\rangle
$$ 
where each $a_i$ corresponds to the 
primitive $(p^i)$'th root $\varepsilon_i$ of unity \cite{Kargapolov Merzljakov}.
As we found in Example~3.6 in \cite{Embeddings with universal words}, this group can be embedded into the $2$-generator group:
$$
T =\big\langle b,c 
\;\mathrel{|}\;\;
\big(c^{(b c)^{\,2}\, b^{\!-1}} 
 (c^{-1})^{b}\big)^p\!\!,\;\;\;\;
\big(c^{(b c^{s+1})^{\,2}\, b^{\!-1}} 
 (c^{-1})^{b}\big)^p 
c^{b} (c^{-1})^{\,(b c^s)^{\,2}\, b^{\!-1}}
\!\!\!,\;\;\; s=1,2,\ldots
\big\rangle.
$$
From the first single relation we get the appropriately permuted sequence (see Remark~\ref{RE about permutations}):
$$
\alpha\, f_{0} \!=\!
\big(
(5p\!+\!2)\times 1,\;\; 
5p\times -1,\;\;\;
(p-1)\times 2,\;\;
p\times -2
\big).
$$
And from the remaining relations we get the respective appropriately permuted sequences: 
$$
\alpha'\, f_{s} \!=\!
\big(
(3p\!+\!4)\times 1,\;\; 
(3p\!+\!3)\times -1,\;\;\;
s,\;\;
-s,\;\;
p\times (s+1),\;\;
p\times (-s-1)
\big),
$$
$s=2,3,\ldots$ \;
(the calculations are omitted).
Clearly, $\alpha'\neq \alpha$.
\end{Example}

Examples similar to 
Example~\ref{EX turn back} and 
Example~\ref{EX embedding of free metabelian into 2-generator group} are easy to construct 
for free \textit{soluble} groups, 
for free \textit{nilpotent} groups and, more generally, for other types of groups defined by commutator-based identities.

Further, since any \textit{divisible} abelian group is a direct product of copies of $\Q$ and of some $\Co_{p^\infty}$,
it is not hard to use
Example~\ref{EX embedding of rational group} and 
Example~\ref{EX embedding of Pruefer group} 
to get sequences of similar formats for them also. Moreover, every abelian group is a subgroup in an abelian divisible group, so we get similar sequences for embeddings of any countable abelian group (provided that its embedding into a countable divisible abelian group is constructively, effectively given).

\begin{Remark}
\label{RE numerous groups}
We could continue collection of examples with the same features, but it already seems to be clear that there are numerous groups for which the respective sequence sets have certain similar ``format''. Namely: 
\begin{enumerate}
\item \vskip-0.7mm Some coordinates in them have a \textit{fixed} value (or one of pre-given fixed values). Say, the initial $0$'th coordinate is equal to $1$ in each sequence $\alpha f_{k,l}$ in Example~\ref{EX turn back}.

\item Some coordinates can accept \textit{any} integer values $k$, like the $22$'nd coordinate $k$ in the sequence $\alpha f_{k,l}$.

\item Some coordinates are \textit{duplicates} of certain other coordinates. Say, the 
$1$'st,
$2$'nd, ...,
$10$'th coordinates in $\alpha f_{k,l}$ all are the duplicates of the $0$'th coordinate $1$. And also 
the $23$'rd coordinate $k$ is the duplicate of the $22$'nd coordinate $k$.

\item Some coordinates are the \textit{opposites} of certain other coordinates. Say, the 
$11$th coordinate $-1$ in $\alpha f_{k,l}$ is the opposite of the 
$10$'th coordinate $1$. Also, the $27$'th coordinate $-k$
is the opposite of the 
$26$'th coordinate $k$.

\item And some coordinates are obtained from other coordinates by \textit{arithmetical operations}.
Say, the 
$33$'th coordinate $k\!-\!l$ in $\alpha f_{k,l}$ is the difference of the 
$22$'nd coordinate $k$ and of the  the $28$'th coordinate $l$.
\end{enumerate}
\end{Remark}

As we see now, construction of a set $\B\in \BB$ by operations \eqref{EQ Higman operations} in many cases can be reduced to the question: 
can we build a ``machine'' which  \textit{constructs $\B$ 
by   performing the five operations listed above}, i.e., by assigning fixed pre-given values to some coordinates, then copying those values to other coordinates, then assigning the opposites, the sums or differences of those values to some other coordinates?
\textit{If yes, then constructive Higman embeddings are available for the considered types of groups. }

In the next section we will step by step collect a positive answer to this question.
The reader may skip to Example~\ref{EX Back to the example for Z^infty} to see an application of the method.

\section{The $H$-machine}
\label{SE H-machine}

\noindent
This is the main section of this note, and its  objective is to show that $\BB$ contains some general kinds of subsets of $\E$
which can be constructed by generic operations outlined in Remark~\ref{RE numerous groups}.
The reader not interested in the routine of proofs may skip the details below.

\subsection{Construction of sum of subsets with disjoint supports}
\label{SU Construction of sum of subsets with disjoint supports}

For definition of the sum of subsets from $\BB$ and of other auxiliary operations we refer to \ref{SU Extra auxiliary operations}. 

\begin{Lemma} 
\label{LE closed under sum}
If the sets
$\mathcal B_k$, $k=1,\ldots, m$, all belong to $\BB$, and their supports 
$S_k=\sup(\mathcal B_k)$ are finite pairwise disjoint sets, then the sum 
$\mathcal B_1+ \cdots + \mathcal B_m$
also belongs to $\BB$.
\end{Lemma}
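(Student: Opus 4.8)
The plan is to reduce to the two-summand case and then exhibit $\mathcal B_1+\mathcal B_2$ as a single intersection of two liberated copies of the summands. First I would induct on $m$: the case $m=1$ is trivial, and since $\sup(\mathcal B_1+\cdots+\mathcal B_{m-1})\sub S_1\cup\cdots\cup S_{m-1}$ is a finite set disjoint from $S_m$, it suffices to treat $m=2$. Indeed, by the inductive hypothesis $\mathcal B_1+\cdots+\mathcal B_{m-1}\in\BB$, its support is finite and disjoint from $S_m=\sup(\mathcal B_m)$, so the two-summand result applies to this partial sum and $\mathcal B_m$.

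For $m=2$ the key observation is that, because $S_1$ and $S_2$ are disjoint, a function supported on $S_1\cup S_2$ that agrees with some member of $\mathcal B_1$ on $S_1$ and with some member of $\mathcal B_2$ on $S_2$ is precisely a sum $f_1+f_2$ with $f_1\in\mathcal B_1$, $f_2\in\mathcal B_2$. To realize this through Higman operations I would liberate each summand off the other summand's support, using the auxiliary operation $\zeta_S$ from \ref{SU Extra auxiliary operations} (which is expressible through \eqref{EQ Higman operations}), and then intersect. The claim is
$$
\mathcal B_1+\mathcal B_2=\iota\big(\zeta_{S_2}\mathcal B_1,\;\zeta_{S_1}\mathcal B_2\big).
$$

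The verification is a routine coordinate check. A function $h$ lies in $\zeta_{S_2}\mathcal B_1$ exactly when there is $f_1\in\mathcal B_1$ with $h(i)=f_1(i)$ for all $i\notin S_2$; since $\sup(\mathcal B_1)=S_1$ is disjoint from $S_2$, this forces $h(i)=f_1(i)$ for $i\in S_1$ and $h(i)=0$ for $i\notin S_1\cup S_2$, while leaving $h$ unrestricted on $S_2$. The situation for $\zeta_{S_1}\mathcal B_2$ is symmetric. Intersecting, a function $h$ on the left-hand side agrees with some $f_1\in\mathcal B_1$ on $S_1$, with some $f_2\in\mathcal B_2$ on $S_2$, and vanishes off $S_1\cup S_2$; as $f_1$ vanishes off $S_1$ and $f_2$ off $S_2$, this says precisely $h=f_1+f_2$, so $h\in\mathcal B_1+\mathcal B_2$. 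The reverse inclusion is the same computation read backwards: given $h=f_1+f_2$, disjointness gives $h(i)=f_1(i)$ for $i\notin S_2$ and $h(i)=f_2(i)$ for $i\notin S_1$, placing $h$ in both liberated sets. Since $\zeta_S$ and the intersection $\iota$ both keep us inside $\BB$, the sum belongs to $\BB$.

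I do not expect a deep difficulty here: the construction itself is short, and the only thing to be careful about is the bookkeeping of which coordinates are pinned to a summand's value, which are freely liberated, and which are forced to be zero. The disjointness of the supports is exactly what makes the liberated-then-intersected set collapse to the pointwise sum rather than to something strictly larger, so getting that bookkeeping right is the main point, not any essential obstacle.
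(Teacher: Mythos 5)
Your proof is correct and follows essentially the same route as the paper: liberate each summand on the other summands' supports via $\zeta_S$ and intersect, the paper simply writing the $m$-fold version $\iota_m(\mathcal B_1^*,\ldots,\mathcal B_m^*)$ in one step where you reduce to $m=2$ by induction. The coordinate bookkeeping you carry out is the same identity the paper leaves as ``easy to see.''
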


\begin{proof} The set
$\mathcal B^*_1 = (\zeta_{\!S_2} \!\cdots \zeta_{\!S_m}) \mathcal B_1$
clearly consists of all functions from 
$\mathcal B_1$ with all coordinates from
$S_2,\ldots,S_m$ liberated. This can be achieved by applying some operations $\zeta_i$ for finitely many times, so $\mathcal B^*_1$ belongs to $\BB$.
In a similar way we define the sets
$\mathcal B^*_2,\ldots, \mathcal B^*_m$ in $\BB$. It is easy to see that 
$\mathcal B_1+ \cdots + \mathcal B_m=\iota_m(\mathcal B^*_1,\ldots,\mathcal B^*_m)$.
\end{proof}

The analog of this lemma could be proved for the case of infinite supports, but we restrict to this case for simplicity.

\subsection{Construction of $(n)$ with restrictions on $n$}
\label{SU Construction of with restrictions on n}

Denote by $\mathcal B_{+}=\big\{(n)  \mathrel{|} n=1,2,\ldots \big\}$
the set of all functions with a single positive coordinate, and by 
$\mathcal B_{-} = \big\{(n)  \mathrel{|} n=-1,-2,\ldots \big\}$
the set of all functions with a single negative coordinate.
Their union $\mathcal B_{\pm}=\big\{(n)  \mathrel{|} n\in \Z \backslash \{0\}\big\}$  is
the set of all functions with a single non-zero coordinate.

\begin{Lemma} 
\label{LE three types}
The sets
$\mathcal B_{+}$, 
$\mathcal B_{-}$
and\,
$\mathcal B_{\pm}$ 
belong to $\BB$. 
\end{Lemma}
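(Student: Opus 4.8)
The plan is to reduce everything to the single set $\mathcal{B}_{+}$. Indeed, once $\mathcal{B}_{+}\in\BB$ is established, the negation operation gives $\rho(\mathcal{B}_{+})=\{(-n)\mid n=1,2,\ldots\}=\mathcal{B}_{-}$, so $\mathcal{B}_{-}\in\BB$; and then the union operation gives $\upsilon(\mathcal{B}_{+},\mathcal{B}_{-})=\mathcal{B}_{\pm}$, so $\mathcal{B}_{\pm}\in\BB$ as well. Thus the whole statement follows from the single claim that $\mathcal{B}_{+}=\{(n)\mid n\ge 1\}$ belongs to $\BB$, and I would organize the proof around that claim.

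To build $\mathcal{B}_{+}$ the idea is to realize the predicate ``$n\ge 1$'' by a \emph{witness}: a finite ascending chain $0=c_0,c_1,\ldots,c_L=n$ with $c_{j+1}=c_j+1$, whose mere existence certifies that $n$ is a positive integer (no such chain exists for $n\le 0$). The set $\S$ already encodes the successor step $c\mapsto c+1$, and the operation $\omega_m$ is exactly the tool that lays down an \emph{unbounded} number of such steps as the successive blocks of one function. First I would assemble, from $\S$ and $\Zz$, the block data of a chain; then, using the liberations $\zeta_i$ and $\pi_i,\pi'_i$, I would free all coordinates lying outside the active window so that the successor constraints are felt only there; finally I would intersect ($\iota$) the appropriate shifted copies to tie the chain together, project onto the coordinate carrying the terminal value $n$ by means of the extract $\epsilon_S$, and re-address it to the index $0$ by a permutation $\alpha$. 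The operation $\theta$ is kept in reserve in case a self-similar (binary) version of the recursion is needed to keep the construction inside $\BB$.

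The main obstacle is the tension between \emph{finite support} and the \emph{monotone} successor relation. If one naively imposes $f(i+1)=f(i)+1$ at every index, the constraint propagates an unbounded strictly increasing chain, which is incompatible with finite support and collapses the set to $\Zz$; the same collapse occurs for any globally cumulative count. The heart of the proof is therefore to confine the successor constraint to the (variable-length) support window, leaving the two boundary transitions to and from $0$ unconstrained --- which is precisely what the independent-block structure of $\omega_m$ together with the liberations makes possible, since $\omega_m$ imposes no relation \emph{between} consecutive blocks. Once this localization is arranged correctly, the remaining work is the bookkeeping verification that the projected set is \emph{exactly} $\mathcal{B}_{+}$: every $n\ge 1$ admits a witness (completeness), while no function $(n)$ with $n\le 0$ survives the intersections (soundness).
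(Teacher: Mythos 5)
Your plan is essentially the paper's own proof: the paper also realizes ``$n\ge 1$'' by a successor chain laid down in $\omega_2$-blocks built from $\tau\S$ and $\zeta_1\Zz$, ties consecutive blocks together by intersecting with the $\sigma^{-1}$-shifted copy, uses finite support to force the chain to terminate (which is exactly the soundness argument ruling out $n\le 0$), pins down the endpoint by a further intersection with a liberated copy of $\tau\S$, and extracts the coordinate carrying $n$. Your only real deviation is obtaining $\mathcal B_{-}$ as $\rho(\mathcal B_{+})$ rather than by a symmetric construction, which is a harmless (indeed slightly cleaner) shortcut; what remains is to execute the bookkeeping you describe, which the paper's explicit formulas carry out.
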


\begin{proof} 
$\mathcal A_1=\omega_2 \upsilon(\zeta_{\!1} \Zz , \tau\S)$
clearly consists of functions $g$ in which for every $i\in \Z$ the subsequence 
$t_i=\big(g(2i),\,g(2i+1)\big)$ is either of type $(0,n)$ or of type $(n,n\!-\!1)$, with $n\in \Z$. 
For any \textit{even} $n=2,4,\ldots$ we can apply $\omega_2$ to the pairs $(n,n-\!1), (n\!-\!2, n\!-\!3),\ldots, (2,1) \in \tau\S$  
to construct in $\mathcal A_1$ the sequence $g=(n,n\!-\!1,\ldots,1)$.
The sequence $g'=(0,n,n-1,\ldots,1,0)$ can be built by the pair $(0,n)\in \zeta_{\!1} \Zz$ and the pairs $(n-1,n-2),\ldots, (3,2), (1,0) \in \tau\S$.
Clearly, $\sigma^{-1} g' = g$, and so $g\in \mathcal A_2 = \iota(\mathcal A_1,\sigma^{-1} \mathcal A_1)$.
In a similar manner we discover in $\mathcal A_2$ all the functions $g=(n,n\!-\!1,\ldots,1)$ for \textit{odd} $n=1,3,\ldots$
This time $g$ is constructed by the pairs 
$(n,n-\!1), (n\!-\!2, n\!-\!3),\ldots, (1,0) \in \tau\S$, and 
$g'$ can be built by $(0,n)\in \zeta_{\!1} \Zz$ with $(n-1,n-2),\ldots, (2,1) \in \tau\S$.
Thus, for any $n=1,2,\ldots$ the set $\mathcal A_2$ contains a function $g$ with the property $g(0)=n$.

Let us show that  $g(i)<0$ is impossible for any $g\in \mathcal A_2$.
Assuming the contrary, suppose the \textit{least} coordinate $g(k)<0$ of $g$ is achieved at some index $k$. 
If $k$ is \textit{even}, then the pair
$\big(g(k), g(k+1)\big)$ in $g\in \mathcal A_1$ has to be either of type $(n, n\!-\!1)\in \tau \S$ (which is impossible as $g(k-1)\nless g(k)$) or of type $(0,n)\in \zeta_1 \Zz$ (which is impossible as  $g(k)=0 \nless  0$). 
And if $k$ is \textit{odd}, then the pair
$\big(g(k), g(k+1)\big)$ in $g\in \sigma^{-1} \mathcal A_1$ has to be either of type $\sigma^{-1}(n, n\!-\!1)$ or of type $\sigma^{-1}(0,n)$ (which both again are impossible). 
%

Next let us exclude those functions $g\in \mathcal A_2$ for which $g(0)=0$. 
Clearly, 
$\mathcal A_3 = \pi_1 \pi' \tau \S$ is the set of all those functions from $\E$ which coincide with $(n,n-\!1)$ on indices $0,1$, and which may have any coordinates elsewhere.
Then $g(0)>0$ for each $g\in  \iota(\mathcal A_2, \mathcal A_3)$, and the extract $\epsilon_0\, \iota(\mathcal A_2, \mathcal A_3)$ is the set 
$\mathcal B_{+}$.

In a similar way can construct 
$\mathcal B_{-}$.\;
And the union of the above is $
\mathcal B_{\rm \pm}
=\upsilon(\mathcal B_{\rm +}, \mathcal B_{\rm -})$.
\end{proof}

The reader may compare the above proof with the argument of Lemma 2.1 in~\cite{Higman Subgroups in fP groups}.

\medskip

Let $\mathcal B_{k+}$ or $\mathcal B_{k-}$ denote the set of all $(n)$ for which $n>k$ or $n<k$, respectively. 

\begin{Lemma} 
\label{LE (n_1), ..., (n_k)}
For any integers $k \in \Z$ the sets
$\mathcal B_{k+}$ and $\mathcal B_{k-}$ 
belong to $\BB$.
\end{Lemma}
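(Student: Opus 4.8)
The plan is to build $\mathcal B_{k+}$ and $\mathcal B_{k-}$ out of the already-constructed set $\mathcal B_{+}\in\BB$ of Lemma~\ref{LE three types} by repeatedly applying a \emph{successor} operation that shifts the single stored value up by one. The key point is that none of the Higman operations \eqref{EQ Higman operations} adds a constant to a coordinate directly; instead one must route the value change through the successor-encoding set $\S=\{(n,n+1)\mid n\in\Z\}$, using intersection and extraction to turn "$f(1)=f(0)+1$'' into an actual increment of the stored value.

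Concretely, for a set $X\subseteq\E$ all of whose members are supported only at the index $0$, I would first liberate the coordinate $1$, forming $\zeta_1 X=\{(n,\ast)\mid (n)\in X\}$, and then intersect with $\S$. Since $\S$ forces $f(1)=f(0)+1$ and kills every other coordinate, the result $\iota(\S,\zeta_1 X)$ is exactly $\{(n,n+1)\mid (n)\in X\}$. Extracting the coordinate at index $1$ then gives $\epsilon_1\,\iota(\S,\zeta_1 X)=\{(n+1)\mid (n)\in X\}$, a set again supported only at $0$. Call this operation $\mathrm{succ}$; by construction it maps $\BB$ into $\BB$ (being a composition of $\zeta_1$, $\iota$ with the base set $\S$, and $\epsilon_1$, all realized by \eqref{EQ Higman operations} in~\ref{SU Extra auxiliary operations}), and it sends a single-coordinate set to its value-incremented copy. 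Its inverse $\mathrm{pred}=\rho\,\mathrm{succ}\,\rho$ decrements the stored value.

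With this in hand the lemma is immediate. Starting from $\mathcal B_{+}=\{(n)\mid n\ge 1\}$, applying $\mathrm{succ}$ once yields $\{(n)\mid n\ge 2\}=\mathcal B_{1+}$, and applying it $k$ times (for $k\ge 0$) yields $\{(n)\mid n>k\}=\mathcal B_{k+}$; for $k<0$ one applies $\mathrm{pred}$ exactly $|k|$ times instead. In every case only finitely many operations are used, so $\mathcal B_{k+}\in\BB$. Finally, since $\rho$ negates all coordinates, $\rho\,\mathcal B_{(-k)+}=\{(-n)\mid n>-k\}=\{(m)\mid m<k\}=\mathcal B_{k-}$, which therefore also lies in $\BB$.

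I expect the only genuine obstacle to be recognizing that a single successor step is expressible at all, i.e.\ that the value-shifting must be channeled through $\S$ rather than through the coordinate-shifting operation $\sigma$, which relocates a value to another index but never alters it. Once the identity $\mathrm{succ}(X)=\epsilon_1\,\iota(\S,\zeta_1 X)$ is checked on supports — the mild bookkeeping being to confirm that the intersection really pins the first coordinate to $X$ while the second coordinate becomes its successor — the iteration and the negation step handling $\mathcal B_{k-}$ are entirely routine.
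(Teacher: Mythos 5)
Your proof is correct and takes essentially the same route as the paper: the paper's inductive step $\mathcal D_k=\iota(\tau\S,\zeta\sigma\mathcal B_{(k-1)+})$ followed by $\epsilon_0$ is exactly your $\mathrm{succ}(X)=\epsilon_1\,\iota(\S,\zeta_1 X)$ up to a $\tau$-swap of which coordinate gets extracted, iterated from $\mathcal B_{+}$. Your explicit handling of negative $k$ via $\mathrm{pred}=\rho\,\mathrm{succ}\,\rho$ and of $\mathcal B_{k-}$ via a single application of $\rho$ is just a tidier packaging of what the paper dismisses as ``analogous.''
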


\begin{proof} The set
$\mathcal D_1=\iota (\tau \S, \zeta \sigma \mathcal B_{+})$ consists of  all pairs $(n+1,n)$ for $n=1,2,\ldots$
Then 
$\mathcal B_{1+}$ is the extract 
$\epsilon_0\,\mathcal D_1$.
By induction we construct in $\BB$ the set $\mathcal D_k=
\iota (\tau \S, \zeta \sigma \mathcal B_{(k-1)+})
$, and the extract
$\mathcal B_{k+}=\epsilon_0\,\mathcal D_k$.
The case of $\mathcal B_{k-}$ is discussed analogously.
\end{proof}

For an integer $n\in \Z$ denote by 
$\mathcal N_n$ the set $\big\{(n)\big\}$ consisting of a single sequence $(n)$ of length $1$.
More generally, denote $\mathcal N_{n_1,\ldots,n_k}=\big\{(n_1), \ldots, (n_k)\big\}$ the set consisting of $k$ functions of the above type.

\begin{Lemma} 
\label{LE (n_1), ..., (n_k)}
For any fixed integers $n_1,n_2,\ldots,n_k \in \Z$ the set
$
\mathcal N_{n_1,\ldots,n_k}
$ 
belongs to $\BB$.
\end{Lemma}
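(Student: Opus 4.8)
The plan is to prove that any finite set $\mathcal N_{n_1,\ldots,n_k}=\big\{(n_1),\ldots,(n_k)\big\}$ belongs to $\BB$ by first constructing each singleton $\mathcal N_{n}=\big\{(n)\big\}$ and then taking the union via $\upsilon_k$. Indeed, once we know each $\mathcal N_{n_i}\in\BB$, we immediately obtain $\mathcal N_{n_1,\ldots,n_k}=\upsilon_k(\mathcal N_{n_1},\ldots,\mathcal N_{n_k})$, and since $\upsilon_k$ is built from finitely many applications of the Higman operation $\upsilon$ (as noted in~\ref{SU Extra auxiliary operations}), the whole set lies in $\BB$. So the entire problem reduces to the single-value case.

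To build $\mathcal N_n$ for a \emph{fixed} integer $n$, I would isolate $(n)$ as an intersection of two one-sided bounded sets already shown to be in $\BB$. By Lemma~\ref{LE (n_1), ..., (n_k)} (the $\mathcal B_{k+}$, $\mathcal B_{k-}$ lemma) we know that $\mathcal B_{(n-1)+}=\big\{(m)\mathrel{|} m>n-1\big\}$ and $\mathcal B_{(n+1)-}=\big\{(m)\mathrel{|} m<n+1\big\}$ both belong to $\BB$. Their intersection
$$
\iota\big(\mathcal B_{(n-1)+},\,\mathcal B_{(n+1)-}\big)
=\big\{(m)\mathrel{|} n-1<m<n+1\big\}=\big\{(n)\big\}=\mathcal N_n
$$
singles out exactly the sequence $(n)$, since $n$ is the only integer strictly between $n-1$ and $n+1$. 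Thus $\mathcal N_n\in\BB$ for every $n\in\Z$, as $\iota$ is a Higman operation. (For the edge value $n=0$ one may alternatively just note $\mathcal N_0=\Zz$, but the intersection argument already covers all integers uniformly.)

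Combining the two steps finishes the proof: each $\mathcal N_{n_i}$ is in $\BB$ by the intersection construction, and $\mathcal N_{n_1,\ldots,n_k}$ is their finite union, hence in $\BB$ as well. The only genuine subtlety — and the step I would be most careful about — is the reduction to the bounded sets: one must verify that Lemma~\ref{LE (n_1), ..., (n_k)} is available for \emph{arbitrary} integer thresholds (including negative and zero $k$), so that both $\mathcal B_{(n-1)+}$ and $\mathcal B_{(n+1)-}$ are legitimately in $\BB$ regardless of the sign of $n$. That lemma is stated for all $k\in\Z$, so no obstacle remains; everything else is a routine bookkeeping of how the composite operations $\iota$ and $\upsilon_k$ expand into the primitive operations~\eqref{EQ Higman operations}.
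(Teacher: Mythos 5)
Your argument is correct, but it takes a different route from the paper. The paper constructs the singletons directly and inductively: $\mathcal N_1=\iota(\tau\S,\zeta\Zz)$ picks out $(1,0)=(1)$, then $\mathcal N_{j+1}=\epsilon_0\,\iota(\tau\S,\zeta\sigma\mathcal N_j)$ climbs upward (and analogously downward for negative values), with $\mathcal N_0=\Zz$, and finally the union is taken exactly as you do. You instead sandwich $(n)$ as $\iota\big(\mathcal B_{(n-1)+},\mathcal B_{(n+1)-}\big)$, outsourcing all the work to the preceding lemma on the half-line sets. Both are valid, and in fact they are close cousins: the paper's proof of the $\mathcal B_{k+}$ lemma uses precisely the same inductive step $\epsilon_0\,\iota(\tau\S,\zeta\sigma\,\cdot\,)$ that it reuses here for the singletons, so your reduction does not really save work, it just relocates it. One caveat you rightly flag but resolve a bit quickly: for $n\le 0$ your construction needs $\mathcal B_{k+}$ at \emph{negative} thresholds $k$, whereas the written proof of that lemma only spells out the upward induction from $\mathcal B_{0+}=\mathcal B_+$; the downward direction does go through (e.g.\ $\mathcal B_{(k-1)+}=\epsilon_0\,\iota(\S,\zeta\sigma\mathcal B_{k+})$), and crucially it must be done without invoking the singletons $\mathcal N_m$ themselves, or the argument would become circular. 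Since the cited lemma is stated and provable for all $k\in\Z$, your proof stands; the paper's version has the advantage of being self-contained and of not leaning on that unstated downward induction.
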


\begin{proof}
It is clear that 
$\mathcal N_1=\big\{(1)\big\}$ is in $\BB$, for, 
$\tau \S = 
\big\{(n\!+\!1,n) \mathrel{|} n\in \Z\big\}
$, and so $\mathcal N_1=\iota (\tau \S, \zeta \Zz)$ consists of  $(0+1,0)=(1,0)=(1)$ only. 
Similarly 
$\mathcal N_2=\big\{(2)\big\}$ is in $\BB$ because 
$\mathcal N_2=\epsilon_0\, \iota (\tau \S, \zeta \sigma \mathcal N_1)$. By induction we construct all the $\mathcal N_3,\; \mathcal N_4, \ldots$
The sets
$\mathcal N_{-1},\; \mathcal N_{-2}, \ldots$ can be obtained in a similar way. 
Finally, $\mathcal N_0=\big\{(0)\big\} = \Zz$.
Taking the union of the required one-element sets 
$\big\{(n_1)\big\},\ldots,\big\{(n_k)\big\}$ we finish the proof.
\end{proof}

\subsection{Duplication of the last term}
\label{SU Duplicating n}

Let $\mathcal B \in \BB$ be any set of functions $g$ 
which are zero after the $k$'th coordinate, i.e.,
$g(i)=0$ for each $i>k$. Then by Higman operations we can ``duplicate'' the $k$'th coordinate in all $g\in \mathcal B$.
More precisely, for each $g\in \mathcal B$ let 
$g'$ be defined as:
$g'(i)=g(i)$ for all $i\neq k\!+\!1$,\;
$g'(k\!+\!1)=g(k)$.
In this notation define $\mathcal B_{k,k} =\{g' \mathrel{|}g\in \mathcal B\}$.

\begin{Lemma} 
\label{LE duplicate k}
Let $\mathcal B \in \BB$ be a set of functions $g$ which are zero after the $k$'th coordinate. 
Then the set $\mathcal B_{k,k}$ also belongs to $\BB$. 
\end{Lemma}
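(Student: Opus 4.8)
The plan is to reduce the whole statement to one auxiliary fact: that the ``diagonal'' constraint set $\mathcal{G}=\{f\in\E \mid f(k)=f(k+1)\}$, with all other coordinates free, belongs to $\BB$. Granting this, the lemma follows in a single step. First I would liberate the $(k\!+\!1)$'th coordinate of $\mathcal{B}$, forming $\zeta_{k+1}\mathcal{B}\in\BB$; its elements are exactly the functions agreeing with some $g\in\mathcal{B}$ off the index $k+1$ and taking an arbitrary value there. Intersecting with $\mathcal{G}$ then pins the liberated coordinate to $f(k+1)=f(k)=g(k)$ while leaving every other coordinate equal to $g(i)$, so that $\iota(\zeta_{k+1}\mathcal{B},\,\mathcal{G})$ is precisely $\mathcal{B}_{k,k}$. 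The hypothesis $g(i)=0$ for $i>k$ guarantees that the original $(k\!+\!1)$'th coordinate is zero, so this procedure faithfully realizes the intended ``duplicate the last term'' operation.

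The heart of the argument, and the step I expect to be the main obstacle, is building $\mathcal{G}$ from $\Zz$ and $\S$, since none of the Higman operations directly expresses ``two coordinates are equal''. I would first build the length-two diagonal $\mathcal{D}=\{(n,n)\mid n\in\Z\}$ by encoding a short successor/predecessor chain. Using $\S$ on the indices $0,1$ gives the relation $f(1)=f(0)+1$; the set $\zeta\sigma\tau\S$ gives the predecessor relation $f(2)=f(1)-1$ with the zeroth coordinate liberated; and $\zeta_2\S$ keeps the first relation while freeing the index $2$. Their intersection $\iota(\zeta_2\S,\,\zeta\sigma\tau\S)$ is the single chain $\{(n,n+1,n)\mid n\in\Z\}$, in which $f(0)=f(2)$. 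Applying the even-coordinate contraction $\theta$ collapses the indices $0$ and $2$ onto $0$ and $1$, which yields $\theta\,\iota(\zeta_2\S,\,\zeta\sigma\tau\S)=\{(n,n)\}=\mathcal{D}\in\BB$.

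Finally I would inflate $\mathcal{D}$ to the full constraint set and position it correctly. Liberating all coordinates after index $1$ and all coordinates before index $0$ gives $\mathcal{G}_0=\pi'\pi_1\mathcal{D}=\{f\mid f(0)=f(1)\}$ with all remaining coordinates free, and a shift by $\sigma^{k}$ moves the equality onto the indices $k,k+1$, producing $\mathcal{G}=\sigma^{k}\mathcal{G}_0\in\BB$. Combining this with the first paragraph, $\mathcal{B}_{k,k}=\iota\big(\zeta_{k+1}\mathcal{B},\,\sigma^{k}\pi'\pi_1\,\theta\,\iota(\zeta_2\S,\,\zeta\sigma\tau\S)\big)\in\BB$. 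The only nonroutine point is the diagonal construction; everything else is bookkeeping with the liberation, shift, swap, and intersection operations already shown to preserve membership in $\BB$.
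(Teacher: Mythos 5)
Your proof is correct. It rests on the same key trick as the paper's: chaining the successor relation $f(1)=f(0)+1$ (from $\S$) with the predecessor relation $f(2)=f(1)-1$ (from $\zeta\sigma\tau\S$) to force $f(2)=f(0)$, which is exactly how the paper's sets $\mathcal F_2=\pi'\zeta_2\S$ and $\mathcal F_3=\pi'\zeta\sigma\tau\S$ encode equality of two coordinates. Where you diverge is in the packaging. The paper intersects these constraints directly with $\zeta_{1,2}\mathcal B$, so the copy of $g(0)$ lands at index $2$ with the unwanted value $g(0)+1$ sitting at index $1$; it must then swap indices $1$ and $2$ via $\tau_{1,2}$ and erase the leftover coordinate by intersecting with $\pi'_2\Zz$. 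You instead first distill the pure diagonal $\{(n,n)\}$ as a standalone set, using $\theta$ to contract $(n,n+1,n)$ so that the two equal entries become adjacent, then inflate it by $\pi'\pi_1$ and shift by $\sigma^k$ into a global constraint $\{f\mid f(k)=f(k+1)\}$, after which a single intersection with $\zeta_{k+1}\mathcal B$ finishes the job with no swap or erasure. Your route is slightly more modular (the diagonal set is reusable) and avoids the final bookkeeping steps; the paper's route avoids $\theta$ and keeps everything anchored to $\mathcal B$ throughout. Both are valid derivations within $\BB$.
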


\begin{proof}
For simpler notation assume $k=0$ as the general case can be deduced to this by shifting $\mathcal B$ by $\sigma^{-k}$\!,\, and then shifting back by $\sigma^{k}$ after duplication of the $0$'th coordinate.

Denote $\mathcal F_1$ to be the set of all functions from $\mathcal B$ with the $1$'st and $2$'nd coordinates liberated, i.e., $\mathcal F_1 =
\zeta_{\!1,2}\,\mathcal B$.
Let $\mathcal F_2$  be the set of all functions $g\in \E$ in which $g(1)=g(0)+1$, and the $2$'nd coordinate together with \textit{all} the negative coordinates are liberated, i.e., 
$\mathcal F_2
=\pi'\zeta_{\!2}\,\S$. 
Let $\mathcal F_3$  be the set of all functions $g$ in which $g(2)=g(1)-1$, and the $0$'th coordinate together with \textit{all} the negative coordinates are liberated, i.e., 
$\mathcal F_3
=\pi'\zeta \sigma \tau\,\S
=\pi'_{1} \sigma \tau\,\S
$.  
Then the $2$'nd and $0$'th coordinates of each function from $\mathcal F_4 = \iota_3 
(\mathcal F_1 ,\, \mathcal F_2 ,\, \mathcal F_3)
$ are equal. 

To get the duplicated set $\mathcal B_{0,0}$ it remains to swap the $2$'nd and $1$'st coordinates in $\mathcal F_4$, and then to erase the new $2$'nd coordinates.
Namely, 
set 
$\mathcal F_5 
=\zeta_2 \tau_{1,2}\mathcal F_4$ and 
$\mathcal F_6 
=\pi'_2\Zz
$, and take the intersection
$\mathcal B_{0,0} =\iota (\mathcal F_5 ,\, \mathcal F_6)$.
\end{proof}

\subsection{Construction of the pairs $(n,-n)$}
\label{SU Construction of (n,-n)}

Denote by $\mathcal B_{+,-} =
\big\{(n,-n)  \mathrel{|} n=1,2,\ldots \big\}
$  
the set of all couples $(n,-n)$ with $n=1,2,\ldots$
The objective of this subsection is to prove:

\begin{Lemma} 
\label{LE (n,-n)}
	The set
	$\mathcal B_{+\,-}$ belongs to $\BB$.  
\end{Lemma}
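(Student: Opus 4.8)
The plan is to realize $\mathcal B_{+,-}$ as an \emph{extract} of a larger, cleverly tiled set that I build with the sequence-building operation $\omega_4$, arranging matters so that a single value $n$ and its opposite $-n$ are forced to appear coupled. First I would prepare two kinds of length-$4$ tiles inside $\BB$: a \emph{filler} tile of the form $(0,0,m,n)$, obtained by liberating two coordinates of $\Zz$, and a \emph{propagating} tile of the form $(m,n,m-1,n+1)$, obtained as a sum (Lemma~\ref{LE closed under sum}) of two suitably shifted and transposed copies of $\S$. Taking $\omega_4$ of the union of these two tile sets produces all functions that are tiled, block by block, by fillers and propagators. The propagating tile simultaneously \emph{decreases} its left entry and \emph{increases} its right entry by $1$, which is precisely the linkage I want between a prospective $n$ and $-n$.

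Second --- and this is the decisive trick --- I would intersect this tiled set with its own shift by $\sigma^{-2}$, forming $\mathcal M = \iota(\mathcal L_3, \sigma^{-2}\mathcal L_3)$. Requiring a function to be tiled both as it stands and after a two-step shift welds the second half of each propagating tile onto the first half of the next, so the blocks fuse into two interlocking arithmetic progressions: a descending chain on the even coordinates and an ascending chain on the odd coordinates. I would first pin down the signs --- the even coordinates are $\ge 0$, the odd ones $\le 0$ --- by arguing that a wrongly-signed entry would force its chain to descend (or ascend) forever, contradicting the finiteness of $\sup(g)$. Then I would classify how each chain can terminate, according to whether it reaches $\pm 1$ in the first or in the second half of a tile.

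The heart of the argument is to show that the descending chain and the ascending chain must \emph{terminate together}: the last nonzero tile is forced to be one of $(2,-2,1,-1)$, $(0,0,1,-1)$, or $(1,-1,0,0)$, by ruling out every mismatched ending through the sign constraints and the $\sigma^{-2}$-coupling. This simultaneity is what guarantees that whenever a coupled positive/negative pair occurs, the negative entry is exactly the opposite of the positive one. I expect this case analysis to be the main obstacle, both because it is the single point where the construction either works or collapses, and because of the degenerate tiles (for instance one that splits the coupling as $(n,0,\ldots)$ rather than $(n,-n,\ldots)$) which must be excluded separately. Once that is settled, the extraction is routine: I would liberate the irrelevant coordinates, intersect with the set of couples having a positive first entry and a negative second entry --- built from $\mathcal B_{+}$ and $\mathcal B_{-}$ via Lemma~\ref{LE three types} and Lemma~\ref{LE closed under sum} --- so as to discard the $n=0$ case, and then read off $\mathcal B_{+,-} = \{(n,-n) \mathrel{|} n = 1,2,\ldots\}$.
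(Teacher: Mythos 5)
Your proposal follows essentially the same route as the paper's own proof: the same filler tiles $(0,0,m,n)$ and propagating tiles $(m,n,m\!-\!1,n\!+\!1)$ combined by $\omega_4$, the same self-intersection $\mathcal M = \iota(\mathcal L_3,\sigma^{-2}\mathcal L_3)$ to fuse the tiles into coupled descending/ascending chains, the same sign and simultaneous-termination analysis, and the same final extraction via $\pi$-type liberation intersected with $\mathcal B_{+}+\sigma\mathcal B_{-}$. The argument is correct and matches the paper in all essentials.
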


Our proof will follow from a series of steps, cases, examples below.

\medskip

The set 
$\mathcal L_1 
=\zeta_{2,3}\Zz
$
can be interpreted as the set of all $4$-tuples
\begin{equation}
\label{EQ first tuple}
(0,0,\;m,n)
\end{equation}
with $m,n\in \Z$. 
Next, 
$\mathcal C_1 = \tau_{1,2}\tau\S$
can be interpreted as the set of all $4$-tuples
$$(m,0,\;\;m\!-\!1,0),$$
while 
$\mathcal C_2 = \tau_{1,2} \sigma^2 \S$
can be interpreted as the set of all $4$-tuples
$$(0,n,\;\;0,n\!+\!1).$$
Then the sum 
$\mathcal L_2 = \mathcal C_1+\mathcal C_2
$
is the set of all $4$-tuples
\begin{equation}
\label{EQ second tuple}
(m,n,\;\; m\!-\!1,n\!+\!1).
\end{equation}
The set  $\mathcal L_3=\omega_4 \upsilon(\mathcal L_1 , \mathcal L_2 )$ consists of $g\in \E$ in which for every $i\in \Z$ the subsequence 
\begin{equation}
\label{EQ 4-tuple}
t_i\!=\!\big(g(4i),\,g(4i\!+\!1),\,g(4i\!+\!2),\,g(4i\!+\!3)\big)
\end{equation}
is of type \eqref{EQ first tuple} or of type \eqref{EQ second tuple} 
(not ruling out the zero $4$-tuple which  is of type \eqref{EQ first tuple} for $m=n=0$).
Define a set $\mathcal M = \iota(\mathcal L_3,\sigma^{-2} \mathcal L_3)$.

\medskip

\textbf{Step 1.} 
Start by showing that if $g\in \mathcal M$, then $g(k)\ge 0$ for any $k=4i,\; 4i+2$ with $i\in \Z$.
Assume the contrary: $m=g(k)< 0$ for some $k$ of one of the above types.

If $k=4i$, i.e, $m$ is the initial term of the $4$-tuple $t_i$ in \eqref{EQ 4-tuple}, then $t_i$ is of type \eqref{EQ second tuple} because the tuples of type \eqref{EQ first tuple} have to start by a zero. Thus we have $g(4i+2)=m-1<0$.

Next assume $k=4i+2$.
As $g\in \sigma^{-2} \mathcal L_3$,
there exists a $g' \in \mathcal L_3$ such that  $\sigma^{-2} g' = g$.
Then $g'(4(i+1)) = g'(4i+2+2) = g(4i+2)=m$, i.e., the next 
$4$-tuple of $g'$ also starts by  negative number $m$, and has to be of type \eqref{EQ second tuple}. 
But then $g'(4(i+1)+2) = m-1$, and so
$g(4i+2+2) = g(4(i+1)) = m-1<0$, i.e., the $(i+1)$'st sequence $t_{i+1}$ in $g$ starts by $m-1$. 

We got that for any $k=4i$ and $k=4i+2$ from $g(k)<0$ it follows $g(k+2)<0$, $g(k+4)<0$, etc., because 
$g(k+2)= m-1$, $g(k+4)= m-2$, etc.
This leads to a contradiction as $g\in \E$ cannot have infinitely many non-zero coordinates.

In a similar way we show that $g(k)\le 0$ for any $k=4i+1, 4i+3$ for $i\in \Z$.

\begin{Example} 
\label{EX two examples with 8-tuples}
Consider two functions $g\in \mathcal M$ of above types.
First, the function 
\begin{equation}
\label{EQ first example with 4}
g=(
4, -4,
3, -3;\;\;\;\;
2, -2,
1, -1
)
\end{equation}
is constructed by two $4$-tuples of type \eqref{EQ second tuple}, and it can be presented as $g=\sigma^{-2} g'$ for
$$
g' =(
0, 0, 
4, -4;\;\;\;\;
3, -3, 
2, -2;\;\;\;\;
1, -1,
0,  0
)
$$ 
which is constructed by one $4$-tuple of type \eqref{EQ first tuple} and 
two $4$-tuples of type \eqref{EQ second tuple}.

Next consider another function
\begin{equation}
\label{EQ second example}
g=(
0, 0, 
4, -4;\;\;\;\;\;
3, -3,
2, -2;\;\;\;\;\;
1, -1,
0, 0
)
\end{equation}
constructed by one $4$-tuple of type \eqref{EQ first tuple}, and two 
$4$-tuples of type \eqref{EQ second tuple}, and this $g$ can be presented as $g=\sigma^{-2} g'$ for the function
$$
g' =(
0,0,0,0;\;\;\;
4, -4,
3, -3;\;\;\;
2, -2,
1, -1
)
$$ 
which is constructed by 
two $4$-tuples of type \eqref{EQ second tuple} (and the zero $4$-tuples, of course).

Observe that in these two functions we took  $4$ and $-4$ to be the \textit{opposites} of each other, ignoring the case of a tuple, say, $(4,0,-5,0)$.
We will cover that issue later.
\end{Example}

\medskip
\textbf{Step 2.} 
We see that from any positive $g(k)$ \textit{a chain} of positive, descending coordinates 
$g(k), g(k+2), g(k+4), \ldots$ starts
for a $k=4i$ or $k=4i+2$.
%
How may this chain end?

\textit{Case 2.1.}
The chain achieves $1$ (its \textit{last} positive coordinate) at some index $4j$, i.e., in the first half of some $4$-tuple $t_j$, like in \eqref{EQ second example}, then the term $g(4j+2)$ automatically is $1-1=0$. Starting from the term $g(4j+4)$ in $t_{j+1}$ we may have either zeros, or a new chain may begin from there.

\textit{Case 2.2.}
The chain achieves $1$ at some index $4j+2$, i.e., in the second half of some $4$-tuple $t_j$, like in \eqref{EQ first example with 4}. Then the next term $g(4j+2+2)=g(4(j+1))$ (which is $0$ and which lies in the next tuple $t_{j+1}$) may have two potential ways to occur:
either the next tuple is of type \eqref{EQ first tuple}, i.e., it starts by two zeros, and after them  we may have either zeros, or a new chain may begin there;\;
or the next tuple is of type \eqref{EQ second tuple} with an initial term $g(4(j+1))=0$. But then the $(4(j+1)+2)$'nd term of that tuple has to be $0-1=-1$. Since negative values are ruled out for such coordinates, that is impossible.

\smallskip 
The analogs of these arguments hold for negative, ascending chains $g(k), g(k+2), g(k+4), \ldots$ starting at some 
$g(k)$ for a $k=4i+1$ or $k=4i+3$.
Namely the \textit{last} negative term $-1$ of such a term is achieved:

\textit{Case 2.3.}
either at some $4j+1$, i.e., in the first half of some $4$-tuple, like in \eqref{EQ first example with 4}, then the next term $g(4j+3)$ automatically is $1-1=0$,

\textit{Case 2.4.}
or $-1$ is achieved at some index $4j+3$, i.e., in the second half of some $4$-tuple, like in \eqref{EQ second example}. Then the next next tuple may be of type \eqref{EQ first tuple} only, i.e., it starts by two zeros.

\medskip
\textbf{Step 3.} 
The key feature of this construction is that the two chains we discuss (the ascending and the descending chains residing inside some consecutive $4$-tuples)  \textit{terminate simultaneously}, i.e.,
the last $4$-tuple $t_j$ 
either ends by $(1,-1)$\;
(i.e., $t_j=(2,-2, 1,-1)$  
or  $t_j=(0,0, 1,-1)$), 
or $t_j=(1,-1,\; 0,0)$.
Assume the contrary, and arrive to contradiction in all cases occurring.

\textit{Case 3.1.}
Assume $t_j$ ends by $(m,-1)$ for an $m\ge 2$. Then by Case 2.2 above the next
$4$-tuple $t_{j+1}$ need start with two zeros. We get a contradiction because $m-1\neq 0$.

\textit{Case 3.2.}
Assume $t_j$ ends by $(0,-1)$\; (that is, $m=0$ in terms of the previous case).
Since $g=\sigma^{-2}g'\in \mathcal L_3$,
the $(j+1)$'st $4$-tuple in $g'$ starts by $(0,-1)$. Then that $4$-tuple in $g'$ is of type \eqref{EQ second tuple}, i.e., it ends by $(0-1,-1+1)=(-1,0)$. So $t_{j+1}$ in $g$ starts by $(-1,0)$, which is a contradiction as $g(4(j+1))=-1$ cannot be negative.

\textit{Case 3.3.}
Assume the last $4$-tuple $t_j$ is
$(m,-1,\;0,0)$ with $m\ge 2$ or $m=0$.
Since $-1\neq 0$, then $t_j$ is of type \eqref{EQ second tuple}. Then its $2$'nd term is $m-1$ which is impossible as $m-1\neq 0$.

\medskip
We get that whenever a $g\in \mathcal M$ contains a couple $(m,n)$ with a positive $m$ and a negative $n$, we have $n=-m$ (see the remark at the end of Example~\ref{EX two examples with 8-tuples}).
In particular, if for some $g\in \mathcal M$ we have $g(0)>0$ and $g(2)<0$, then 
$g(2)=-g(2)$. Clearly, for any positive $n$ we can build an $g\in \mathcal M$ with $g(0)=n$ and $g(2)=-n$.

\medskip
\textbf{Step 4.} 
Denote by $\mathcal C_3 = \pi_2 \pi'\mathcal M$ the set of all functions $g\in \E$ which coincide with some $(n,-n)$ with $n=0,1,\ldots$, and which have any coordinates elsewhere.
$\mathcal C_4 =
\mathcal B_{+} + \sigma \mathcal B_{-}
$ can be interpreted as the set of all couples $(m,n)$ with positive $m$ and negative $n$.
Then $\mathcal B_{+,-} = \iota(\mathcal C_3, \mathcal C_4)$ is the set of all couples $(n,-n)$ with $n=1,2,\ldots$

\medskip
Thus, Lemma~\ref{LE (n,-n)} is fully argued.
\medskip

If needed, we can easily get the analogs of Lemma~\ref{LE (n,-n)} not only for the couples  $(n,-n)$ for \textit{all} $n=1,2,\ldots$ but for, say, $n=k,k+1,\ldots$, or for $n$ from a given finite set only.

\subsection{Construction of the triples $(p,\;q,\;p-q)$ and $(p,\;q,\;p+q)$}
\label{SU Construction of the triples (...)}
Assume the set $\mathcal P$ consists of some pairs $(p,q)$. In this subsection we show that if $\mathcal P$ belongs to $\BB$, then the set consisting of all triples 
$(p,\;q,\;p-q)$ and the set consisting of al triples $(p,\;q,\;p+q)$ also belong to $\BB$.

For a fixed pair $(p,q)$ denote by $\mathcal P_1$ the set of $8$-tuples of the following types:
\begin{equation}
\label{EQ type 1}
(p,q,q,0,\;\;0,0,0,0),
\end{equation}
\begin{equation}
\label{EQ type 2}
(0,0,0,0,\;\;p,q,p,n),
\end{equation}
\begin{equation}
\label{EQ type 3}
(p,q,m,n,\;\;p,q,m\!-\!1,n\!-\!1),
\end{equation}
\begin{equation}
\label{EQ type 4}
(p,q,m,n,\;\;p,q,m\!+\!1,n\!+\!1),
\end{equation}
for any $m,n\in \Z$, 
together with the zero function which we can interpret as the $8$-tuple 
$(0,0,0,0,\;\, 0,0,0,0)$.

The set of $8$-tuples of type \eqref{EQ type 1} is in $\BB$, since
we can apply Lemma~\ref{LE duplicate k} to couples $(p,q)$ to duplicate the coordinate $q$.
Similarly, the set of $8$-tuples of type \eqref{EQ type 2} also are in $\BB$.
The $8$-tuples of type \eqref{EQ type 3} can be obtained as follows: the set of all $8$-tuples of type $(0,0,m,n,\;\;0,0,m\!-\!1,n\!-\!1)$
can be obtained using a permutation $\alpha$ of the set 
$\sigma^2 \tau \S + \sigma^3  \tau \S$.
Then we take the sum of that set and the set of all $8$-tuples
$(p,q,0,0,\;\;p,q,0,0)$.
The case of $8$-tuples of types  \eqref{EQ type 4} is covered in a similar way.
This means the combined set $\mathcal P_1$ of \textit{all} $8$-tuples of types  \eqref{EQ type 1}--\eqref{EQ type 4} is in $\BB$.

Thus, the intersection $\mathcal P_2 = \iota(\mathcal P_1,\; \sigma^{-4}\mathcal P_1)$ also is in $\BB$.
Using Higman operations on $\mathcal P_2$ we can construct $(p,\;q,\; p-q)$. Let us first explain the idea by simple examples:

\begin{Example}
\label{EX construction of (p, q, p-q)}
Let $p=6$ and $q=2$. The sequence
$$
g=(
6, 2, 6, 4, \;
6, 2, 5, 3;\;\;\;\;
6, 2, 4, 2, \;
6, 2, 3, 1;\;\;\;\;
6, 2, 2, 0, \;
0, 0, 0, 0
)
$$
is constructed by two $8$-tuples of type \eqref{EQ type 3} and by one $8$-tuple of type \eqref{EQ type 1}. And $g$ can be presented as $g=\sigma^{-4} g'$ for
$$
g' =(
0, 0, 0, 0,
6, 2, 6, 4;\;\;\;\;
6, 2, 5, 3, 
6, 2, 4, 2;\;\;\;\;
6, 2, 3, 1,
6, 2, 2, 0
)
$$ 
which is constructed by one $8$-tuple type \eqref{EQ type 2} and 
two $8$-tuples of type \eqref{EQ type 3}.
Note that the $3$'rd coordinate in $g$ is $p-q=6-2=4$.

Yet another function
$$
g=(
0, 0, 0, 0,\;
6, 2, 6, 4;\;\;\;\;
6, 2, 5, 3, \;
6, 2, 4, 2;\;\;\;\;
6, 2, 3, 1, \
6, 2, 2, 0
)
$$
is constructed by one $8$-tuple of type \eqref{EQ type 2} and by two $8$-tuples of type \eqref{EQ type 3}. And $g$ can be presented as $g=\sigma^{-4} g'$ for
$$
g' =(
0, 0, 0, 0,\;
0, 0, 0, 0;\;\;\;\;
6, 2, 6, 4,\;
6, 2, 5, 3;\;\;\;\;
6, 2, 4, 2,\;
6, 2, 3, 1;\;\;\;\;
6, 2, 2, 0,\;
0, 0, 0, 0
)
$$ 
which is constructed by two $8$-tuples of type \eqref{EQ type 3} and 
one $8$-tuple of type \eqref{EQ type 1} (and the zero sequences, of course).
Note that the $11$'th coordinate in $g$ is $p-q=6-2=4$.

As we see, using Higman operations it is easy to obtain the triple $(6,\, 2,\, 4)$ from any of the functions $g$ constructed above. Using some loose wording we could say that we ``mimic'' the arithmetical operation $6-2=4$ by means of Higman operations (in the sense that we were able to build the triple $(6,\; 2,\; 6\!-\!2)$).
We did this using some descending chains of coordinates (at indices $4k+2, 4k+6, \ldots$) starting by $6$ and ending by $2$. 

The purpose of the numbers $6,2$ standing at some indices $8k, 8k\!+\!1$ or $8k\!+\!4, 8k\!+\!5$ is the following. Besides the pair $(6,2)$ our set $\mathcal P$ may also contain another pair, say, $(9,1)$. We want to construct in $\mathcal P$ the triple $(6,\, 2,\, 4)$ \textit{without} 
adding the unnecessary triple $(6,\, 1,\, 5)$ into $\mathcal P$. That is, we need a descending chain starting by $6$ and ending by $2$ (but \textit{not} by $1$). So those numbers $6,2$ guarantee that we concatenate $8$-tuple corresponding to the \textit{same} pair $(6,2)$ only.

Observe that $p\ge q$ in each of above examples. When $p< q$, then we could build \textit{ascending} chains using $8$-tuples of type \ref{EQ type 4}. Say, if $p=3$ and $q=9$, the sequence
\begin{equation*}
g=(
3, 9, 3, -6, 
3, 9, 4, -5;\;\;
3, 9, 5, -4, 
3, 9, 6, -3;\;\;
3, 9, 7, -2, 
3, 9, 8, -1;\;\;
3, 9, 9,  0, 
0, 0, 0,  0
)
\end{equation*}
is constructed by three $8$-tuples of type \eqref{EQ type 4} and by one $6$-tuple of type \eqref{EQ type 1}. And $g$ can be presented as $g=\sigma^{-4} g'$ for the function
$$
g' =(
0, 0, 0, 0,
3, 9, 3, -6;\;\;
3, 9, 4, -5,
3, 9, 5, -4;\;\;
3, 9, 6, -3,
3, 9, 7, -2;\;\;
3, 9, 8, -1,
3, 9, 9,  0
)
$$ 
which is constructed by one $8$-tuple of type \eqref{EQ type 2} and 
three $8$-tuples of type \eqref{EQ type 4}.
Note that the $3$'rd coordinate in $g$ is $p-q=3-9=-6$.
\end{Example}

After these examples the formal proofs are simpler to understand. Assume a pair $(p,q)$ is chosen, and 
$g$ is any non-zero function in $\mathcal P_2$. Since $g\in \E$, there is a \textit{first} $8$-tuple 
$$
t_i\!=\!\big(g(8i),\,g(8i\!+\!\!1),\,g(8i\!+\!2),g(8i\!+\!3),\;\;\;\,g(8i\!+\!4),\,g(8i\!+\!5),\,g(8i\!+\!6),\,g(8i\!+\!7)\big)
$$
in which $g$ has its first non-zero coordinate.
Using arguments similar to those in
Subsection~\ref{SU Construction of (n,-n)}
we show that if $t_i$ is of type \eqref{EQ type 3}, then a \textit{descending} chain of coordinates $g(8i+2), g(8i+6), g(8i+10), \ldots$ starts from $t_i$. If $p<q$ then this chain never ends, which is a contradiction to the fact that $g\in \E$ has finitely many non-zero coordinates.
If $p\ge q$ then this chain ends either by the $8$-tuple $(p, q, q, 0, \;\;
0, 0, 0, 0)$ of type \eqref{EQ type 1}, or 
by the $8$-tuple $(p, q, q\!+\!1, 1, \;\;
p, q, q, 0)$ of type \eqref{EQ type 3}.
This means that $g(8i\!+\!2)$ is equal to $p-q$.

If $t_i$ is of type \eqref{EQ type 4}, then an \textit{ascending} chain of coordinates starts from $t_i$. If $p>q$, we get a contradiction, and if $p\ge q$, we again get that $g(8i\!+\!2)$ is equal to $p-q$.

Finally, if $t_i$ is of type \eqref{EQ type 2} we get a descending or ascending chain, and then,
$g(8i\!+\!6)$ is equal to $p-q$.

The ``extremal'' case, when $t_i=(p,q,q,0,\;0,0,0,0)$ is of type \eqref{EQ type 1} for a $g$ is possible only if the respective $g'$ either starts by $(0,0,0,0,\;\;p,q,p,n)$ of type \eqref{EQ type 2}\; (i.e., $p=q$, and $n=0$, that is, we again have the equality $p-q=p-p=n=0$), or 
$g'$ starts by $(p,q,m,n,\;p,q,m\!-\!1,n\!-\!1)$ of type \eqref{EQ type 3}\;
(i.e., $p=q=m=n=0$ which leads to a contradiction, as then $t_i$ is a zero $8$-tuple). The case when  $t_i$ is of type \eqref{EQ type 4} is excluded in a similar way.

We see that a sequence $g \in \mathcal P_2$ can consist of a few $8$-tuples (holding a chain of the above types) only. Now we need extract the required fragments $(p,\;q,\;p-q)$.

Clearly $\pi_1 \mathcal P$ consists of sequences of type $h=(p, q, n_2, n_3,\ldots)$ with 
$(p,q) \in \mathcal P$, and with only finitely many of the coordinates 
$n_2, n_3,\ldots$ being non-zero.

Denote $\mathcal P_3 =  \;\iota(\mathcal P_2,\; \pi_1 \mathcal P)$ and choose any $g \in \mathcal P_3$.

Since $g \in \mathcal P_2$, it is constructed by some $8$-tuples of one of the types \eqref{EQ type 1}--\eqref{EQ type 4}. Since also $g \in \pi_1 \mathcal P$, its \textit{first} non-zero $8$-tuple occupies indices $0$--$7$, and is of types 
\eqref{EQ type 2} or \eqref{EQ type 3} with one of $p,q$ being non-zero.
Then by our construction $g$ starts by the triple $(p,\;q,\;p-q)$). 

The case when $\mathcal P$ does contain the couple $(0,0)$ also is covered by our construction because in that case the $8$-tuple of type \eqref{EQ type 1}
with $p=q=0$ is in $\mathcal P_2$, and to 
$\mathcal P_3$ contains a sequence starting by $(0,0,0)$. 

\medskip
The extract set $\epsilon_{0,1,2}\, \mathcal P_3$ is the set of triples $\mathcal P_{1,2,1-2}=\{(p,\;q,\;p-q) \mathrel{| } (p,\;q)\in \mathcal P\}$.

The other set $\mathcal P_{1,2,1+2}$ can now be obtained in two ways. Either we can modify the constructions above to adapt it for the triples $(p,\;q,\;p+q)$. Or we can use the construction of Subsection~\ref{SU Construction of (n,-n)} to build the
set of $\mathcal P_4$ of triples  
$(p,\;q,\;-q)$. Then the extract $\epsilon_{0,2}\;\mathcal P_4$ is the set $\{(p,\;p-q) \mathrel{| } (p,\;q)\in \mathcal P\}$. So we can directly apply the already constructed proof to get the triples $(p,\;-q,\;p-(-q))=(p,\;-q,\;p+q)$, and finally, replace $-q$ by $q$.

We proved the following lemma.

\begin{Lemma} 
\label{LE (p,q,p-q), (p,q,p+q)}
If the sets
$\mathcal P$ belongs to $\BB$,
then the sets 
$\mathcal P_{1,2,1-2}$ and 
$\mathcal P_{1,2,1+2}$
both belong to  $\BB$.
\end{Lemma}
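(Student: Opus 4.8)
The plan is to realise the arithmetic operation $p-q$ by a Higman-constructible ``counting gadget'' built from $8$-tuples, to isolate the single answer coordinate by an intersection and an extract, and then to obtain addition cheaply by reducing it to subtraction through the negation construction of Subsection~\ref{SU Construction of (n,-n)}. Throughout, the essential trick is that each gadget tuple carries a copy of its defining pair $(p,q)$ as a marker, so that only tuples stemming from the \emph{same} pair of $\mathcal P$ can be legitimately concatenated.

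First I would assemble from $\mathcal P$ an auxiliary set $\mathcal P_1$ consisting of $8$-tuples of the four shapes \eqref{EQ type 1}--\eqref{EQ type 4}. Each tuple stores the marker $(p,q)$ in its first two slots together with a counter in slots $2,3$ that is decremented (type \eqref{EQ type 3}) or incremented (type \eqref{EQ type 4}) by one. That $\mathcal P_1$ lies in $\BB$ follows from the tools already available: the marker is produced by duplicating coordinates via Lemma~\ref{LE duplicate k}, the $\pm1$ steps are copies of $\S$ after suitable shifts $\sigma^i$ and transpositions $\alpha$, and the separate blocks are glued together by the disjoint-support sum of Lemma~\ref{LE closed under sum}. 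I would then form $\mathcal P_2=\iota(\mathcal P_1,\sigma^{-4}\mathcal P_1)$, so that consecutive $8$-tuples overlap by one tuple-length and their counters are forced to agree across the seam; this is the exact analogue of passing from $\mathcal L_3$ to $\mathcal M$ in Subsection~\ref{SU Construction of (n,-n)}.

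The heart of the argument, mirroring Steps~1--3 there, is to show that every nonzero $g\in\mathcal P_2$ encodes a \emph{complete} chain running from the value $p$ down to $q$ (or, when $p<q$, up to $q$). Because each tuple repeats $(p,q)$, tuples from distinct pairs cannot be joined, and a chain begun from a ``wrong'' value would have to descend or ascend forever, contradicting the finiteness of $\sup(g)$; this pins the chain so that it terminates precisely at $q$, whence the coordinate $g(8i+2)$ of the first nonzero tuple equals $p-q$. Intersecting with $\pi_1\mathcal P$ to force that first tuple onto indices $0$--$7$ gives $\mathcal P_3=\iota(\mathcal P_2,\pi_1\mathcal P)$, each element of which begins with $(p,q,p-q)$, and the extract $\epsilon_{0,1,2}\,\mathcal P_3$ is then exactly $\mathcal P_{1,2,1-2}$.

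For the addition set $\mathcal P_{1,2,1+2}$ I would avoid repeating the chain analysis altogether: using Subsection~\ref{SU Construction of (n,-n)} I build the triples $(p,q,-q)$, extract $\epsilon_{0,2}$ to obtain the pairs $(p,-q)$, feed these into the subtraction construction just proved to produce $(p,-q,p-(-q))=(p,-q,p+q)$, and finally restore $q$ in place of $-q$. The hard part will be the termination analysis underlying the claim that $g(8i+2)=p-q$: one must rule out every way a chain could start from a negative value or stop short of $q$, treat the ``extremal'' tuples of types \eqref{EQ type 1} and \eqref{EQ type 2} as separate cases, and verify in particular that the counter chain and its shifted copy living in $\sigma^{-4}\mathcal P_1$ terminate \emph{simultaneously}, since it is their joint termination that certifies the arithmetic identity.
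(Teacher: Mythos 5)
Your proposal is correct and follows essentially the same route as the paper: the same four $8$-tuple gadget types \eqref{EQ type 1}--\eqref{EQ type 4} carrying the $(p,q)$ marker, the same overlap intersection $\mathcal P_2=\iota(\mathcal P_1,\sigma^{-4}\mathcal P_1)$, the same termination-by-finite-support chain analysis yielding $g(8i+2)=p-q$, the same isolation via $\iota(\mathcal P_2,\pi_1\mathcal P)$ and $\epsilon_{0,1,2}$, and the same reduction of addition to subtraction through the $(n,-n)$ construction. The remaining work you flag (the case analysis for chain termination and the extremal tuples) is exactly what the paper's proof carries out.
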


Combining Lemma~\ref{LE (p,q,p-q), (p,q,p+q)}
with Lemma~\ref{LE duplicate k}
and Lemma~\ref{LE (n,-n)}
we get that if $\mathcal Q$ is a set of some $(q)$, then  $\mathcal Q \in \BB$ implies  that $\BB$ contains the set of all couples $(q,q)$,
the set of all triples $(q,q, 2q)$,
and the set of all couples $(q,2q)$ (which is obtained from the set of previous triples via the extract $\epsilon_{0,2}$).
Repeating this we get the set of all couples $(q,s\cdot q)$ for any pre-given integer $s$, and $q\in \mathcal Q$.

\medskip

Lemma~\ref{LE (p,q,p-q), (p,q,p+q)} can also be generalized by taking any distinct indices instead of $0,1,2$. Let $\B$ be any subset of $\E$, and let $p=g(i)$ and $q=g(j)$ be the $i$'th and $j$'th coordinates of generic $g\in \B$. For an index $k$, different from $i,j$ denote by 
$\mathcal P_{i,\,j,\,i-j,\,k}$ the set of all functions $f\in \E$ for which there is a $g\in \B$ such that 
$f$ coincides with $g$ on all coordinates except the $k$'th, and $f(k)=p-q$. In other words, we replace the $k$'th coordinate in each $g\in \B$ by $p-q=g(i)-g(j)$.
We can similarly define the set $\mathcal P_{i,\,j,\,i+j,\,k}$.

\begin{Lemma}
\label{LE (p,q,p+q) at any place}
If the set
$\mathcal B$ belongs to $\BB$,
and for fixed $i,j$ the set $\mathcal P = \big\{(p,q) \mathrel{|} g\in \B,\; p=g(i),\; q=g(j)\big\}$ also belongs to $\BB$,
then the sets 
$\mathcal P_{i,\,j,\,i-j,\,k}$ and 
$\mathcal P_{i,\,j,\,i+j,\,k}$
both belong to  $\BB$.
\end{Lemma}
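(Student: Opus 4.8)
The plan is to \emph{decouple} the arithmetic (forming the difference $p-q$) from the bookkeeping (retaining every other coordinate of the generic $g\in\mathcal B$). The arithmetic is already supplied by Lemma~\ref{LE (p,q,p-q), (p,q,p+q)}; what remains is to deposit its output at the prescribed index $k$ while leaving all remaining coordinates of $g$ untouched. I would realize this by liberating the $k$'th coordinate of $\mathcal B$ and intersecting with a ``difference-constraint'' set $\mathcal D\in\BB$ that forces $f(k)=f(i)-f(j)$ and leaves every other coordinate free.

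First I would build $\mathcal D$. Starting from $\mathcal P\in\BB$ (a set of pairs placed on the coordinates $0,1$), Lemma~\ref{LE (p,q,p-q), (p,q,p+q)} yields the triple set $\mathcal P_{1,2,1-2}=\{(p,q,p-q)\mid(p,q)\in\mathcal P\}$ supported on $\{0,1,2\}$. Applying the coordinate permutation $\alpha$ with $\alpha(0)=i$, $\alpha(1)=j$, $\alpha(2)=k$ (legitimate since $i,j,k$ are distinct) moves this to the set $\mathcal D_0$ of functions supported on $\{i,j,k\}$ with $f(i)=p$, $f(j)=q$, $f(k)=p-q$ for $(p,q)\in\mathcal P$. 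Then I would liberate \emph{all} coordinates except $i,j,k$: writing $a=\min(i,j,k)$ and $b=\max(i,j,k)$, the composite $\mathcal D=\pi_b\,\pi'_a\,\zeta_{S'}\,\mathcal D_0$, with $S'=\{a+1,\dots,b-1\}\setminus\{i,j,k\}$, frees every coordinate above $b$, below $a$, and strictly between $a$ and $b$ other than $i,j,k$, while keeping the relation among $f(i),f(j),f(k)$. Thus $\mathcal D$ consists of all $f\in\E$ with $(f(i),f(j))\in\mathcal P$ and $f(k)=f(i)-f(j)$, with no further restriction, so $\mathcal D\in\BB$. (Because I liberate all but three coordinates here, no finiteness of $\sup(\mathcal B)$ is needed.)

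Next I would set $\mathcal B'=\zeta_k\mathcal B$, the set of all functions agreeing with some $g\in\mathcal B$ off the index $k$, and claim $\mathcal P_{i,j,i-j,k}=\iota(\mathcal B',\mathcal D)$. Indeed, for $f$ in this intersection $\mathcal B'$ pins $f$ to the corresponding $g$ at every index other than $k$ (in particular $f(i)=g(i)$, $f(j)=g(j)$), while $\mathcal D$ leaves $k$ free but imposes $f(k)=f(i)-f(j)=g(i)-g(j)$; conversely every such $f$ lies in both factors, since $(g(i),g(j))\in\mathcal P$ holds automatically for $g\in\mathcal B$. The set $\mathcal P_{i,j,i+j,k}$ is obtained verbatim, using the $(p,q,p+q)$ clause of Lemma~\ref{LE (p,q,p-q), (p,q,p+q)} in place of the $(p,q,p-q)$ clause.

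The step I expect to require the most care is the bookkeeping in $\iota(\mathcal B',\mathcal D)$: one must liberate \emph{exactly} coordinate $k$ on the $\mathcal B$ side and \emph{exactly} the complement of $\{i,j,k\}$ on the $\mathcal D$ side, so that the two liberation patterns are complementary at every index and the intersection neither discards functions (by constraining some coordinate on both sides) nor admits spurious ones (by leaving a coordinate free on both sides). Checking this complementarity at each index---in particular in the cases where $k$ does or does not already lie in $\sup(\mathcal B)$---is the only genuinely delicate point; everything else reduces to the already-established Lemma~\ref{LE (p,q,p-q), (p,q,p+q)} and to the auxiliary operations of Subsection~\ref{SU Extra auxiliary operations}.
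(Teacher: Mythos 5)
Your proposal is correct and is essentially the paper's own argument, merely conjugated by the permutation: the paper moves the coordinates $i,j,k$ of $\mathcal B$ down to $0,1,2$, liberates coordinate $2$, intersects with the liberated triple set $\pi'\pi_2\mathcal P_{1,2,1-2}$, and permutes back, whereas you permute the triple set up to $\{i,j,k\}$ and intersect in place with $\zeta_k\mathcal B$. The decomposition, the reliance on Lemma~\ref{LE (p,q,p-q), (p,q,p+q)}, and the complementary-liberation-plus-intersection mechanism are identical.
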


\begin{proof}
Applying the appropriate permutation $\alpha$ we can reorder the coordinates of each $g\in \B$ so that
$\alpha g (0)=p=g(i)$,\;
$\alpha g (1)=q=g(j)$,\;
$\alpha g (2)=  g(k)$.
Then $\mathcal R_1=\zeta_2\alpha\B$
consists of all those reordered sequences with the $2$'nd coordinate liberated.

Applying Lemma~\ref{LE (p,q,p-q), (p,q,p+q)} to $\mathcal P$ we get that the set 
$\mathcal P_{1,\,2,\,1-2}=\big\{
(p,\,q,\,p-q) \mathrel{|}
(p,\,q)\in \mathcal P 
\big\}$
is in $\BB$.
Then $\mathcal R_2=\pi'\pi_2\mathcal P_{1,\,2,\,1-2}$ consists of all $g\in \E$ which coincide with 
$(p,\,q,\,p-q)$ on indices $0,1,2$, and which may have arbitrary coordinates elsewhere.
Then the intersection $\mathcal R_3=\iota(\mathcal R_1,\mathcal R_2)$
consists of all sequences $g$ from $\mathcal R_1$ in which the $2$'nd coordinate is replaced by 
$p-q=g(0)-g(1)$.
It remains to apply the permutation $\alpha^{-1}$ to get 
$\mathcal P_{i,\,j,\,i-j,\,k}=\alpha^{-1} \mathcal R_3$.

\smallskip
The proof for $P_{i,\,j,\,i+j,\,k}$ is similar.
\end{proof}

\subsection{An application of the $H$-machine}
\label{SU Application of the machine}

Now we are in position to launch the $H$-machine to construct the series of sets $\B$ mentioned in examples in Subsection~\ref{SU The structure of sequences} by Higman operations  \eqref{EQ Higman operations}.
Here we do that for the group $\Z^\infty$.

\begin{Example} 
\label{EX Back to the example for Z^infty}
For the free abelian group $\Z^\infty$ in 
Example~\ref{EX turn back} we have the set $\B$ of sequences $f_{k,l}$ of length $35$ constructed in Example~\ref{EX turn back}. 
The following algorithm constructs $\B$ by operations  \eqref{EQ Higman operations}:

\begin{enumerate}
\item Using the single permutation $\alpha$ constructed in Example~\ref{EX turn back} bring the sequences $f_{k,l}$ to simpler form $\alpha\, f_{k,l}$. 

\item Using Lemma~\ref{LE (n_1), ..., (n_k)} obtain the set $\big\{(1)\big\}$.

\item Using Lemma~\ref{LE duplicate k} ten times, duplicate the $0$'th coordinate $1$ to get the set $\mathcal A_1$ consisting of one  $11$-tuple
$(1,\ldots,1)$.

\item Using Lemma~\ref{LE (n_1), ..., (n_k)} obtain the set $\big\{(-1)\big\}$.

\item Using Lemma~\ref{LE duplicate k} ten times, duplicate the $0$'th coordinate $-1$ to get the set $\mathcal A_2$ consisting of one  $11$-tuple
$(-1,\ldots,-1)$.

\item By Lemma~\ref{LE (n,-n)}
the set $\mathcal B_{+,-}$ of all couples
$(k,-k)$, $k=1,2,\ldots$, 
is in $\BB$. Construct the set $\mathcal A_3$ of all  $5$-tuples 
$(k,k,-k,-k,-k)$, $k=1,2,\ldots$, i.e., duplicate in $\mathcal B_{+\,-}$ the coordinate $-k$ twice by Lemma~\ref{LE duplicate k},
then rotate the resulting set by $\rho$,  duplicate the $0$'th coordinate $k$, and then rotate back by $\rho$, and shift by $\sigma$.

\item Similarly construct the set $\mathcal A_4$ of all  $5$-tuples 
$(l,l,l,-l,-l)$, $l=1,2,\ldots$

\item The sum 
$\mathcal A_5 =\mathcal A_1 
+ \sigma^{11}\mathcal A_2
+ \sigma^{22}\mathcal A_3
+ \sigma^{27}\mathcal A_4
$ consists of $32$-tuples (indexed by $0,1,\ldots,31$):  the above $11$-tuples $(1,\ldots,1)$, followed by $11$-tuples $(-1,\ldots,-1)$, followed by $5$-tuple
$(k,k,-k,-k,-k)$ and then followed by
$5$-tuple $(l,l,l,-l,-l)$ with any 
$k,l=1,2,\ldots$

\item Using Lemma~\ref{LE (p,q,p+q) at any place} on the set $\mathcal A_5$
for 
$i=22$,\; $j=27$,\; $k=32$,
we adjoin a new $32$'rd entry (equal to the respective $k-l$) 
to sequences from $A_5$.
Repeating this step for $k=33$ adjoin a $33$'rd entry $k-l$. Call the new set $\mathcal A_6$.

\item Using Lemma~\ref{LE (p,q,p+q) at any place} 
on $\mathcal A_6$ for 
$i=27$,\; $j=22$,\; $k=34$
we adjoin a new, $34$'th entry $l-k$ to all sequences from $\mathcal A_6$.
I.e., we get the set of all sequences 
$\alpha\, \B$.

\item Apply the inverse $\alpha^{-1}$ of the permutation $\alpha$ used in Example~\ref{EX turn back} to get the set $\B$ of all $f_{k,l}$.

\item As a last step use the definition in Subsection~\ref{SU Extra auxiliary operations} to 
replace by Higman operations  \eqref{EQ Higman operations} each of the auxiliary operations 
$\sigma^i$,\,
$\zeta_i$,\,
$\zeta_{S}$,\,
$\pi_i$,\,
$\pi'_i$,\,
$\tau_{k,l}$,\,
$\alpha=\tau_{k_1,l_1}\cdots \tau_{k_m,l_m}$,\,
$\epsilon_{S}$,\,
addition $+$,\; 
$\iota_n$ that we used in previous steps. 
\end{enumerate}
\end{Example}

\begin{Remark}
\label{RE Comparing the very similarly looking sequences}
Comparing the very similarly structured sequences of  Example~\ref{EX turn back},
Example~\ref{EX embedding of free metabelian into 2-generator group},
Example~\ref{EX embedding of rational group} or  Example~\ref{EX embedding of Pruefer group} 
the reader can see how easy it would be to adapt the above algorithm for free metabelian, soluble, nilpotent groups, for $\Q$ (for \cite{An explicit embedding of Q}), for $\Co_{p^\infty}$,\ or for their direct products including divisible abelian groups, and for any other constructively given subgroups therein. 
\end{Remark}

\vskip-2mm

\medskip
\noindent 
E-mail:
\href{mailto:v.mikaelian@gmail.com}{v.mikaelian@gmail.com}
$\vphantom{b^{b^{b^{b^b}}}}$

\noindent 
Web: 
\href{https://www.researchgate.net/profile/Vahagn-Mikaelian}{researchgate.net/profile/Vahagn-Mikaelian}

\end{document}